\titleformat{\subsection}[runin]
  {\normalfont\large\bfseries}{\thesubsection}{1em}{}
\titleformat{\section}[runin]
  {\normalfont\large\bfseries}{\thesection}{1em}{}
\newtheorem*{remark}{Remark}
\newtheorem{theorem}{Theorem}[section]
\newtheorem{lemma}[theorem]{Lemma}
\numberwithin{equation}{section}
\title{Formulation and analysis of a Schur complement method for fluid-structure interaction}
\author{Amy de Castro
\thanks{School of Mathematical and Statistical Sciences, Clemson University, Clemson, SC  29634-0975
    ({\tt agmurda@clemson.edu}). Partially supported by the NSF under grant number DMS-2207971.}
\and
Hyesuk Lee
  \thanks{School of Mathematical and Statistical Sciences, Clemson
        University, Clemson, SC  29634-0975 ({\tt hklee@clemson.edu}).
    Partially supported by the NSF under grant number DMS-2207971.}
\and
Margaret M. Wiecek
  \thanks{School of Mathematical and Statistical Sciences, Clemson
        University, Clemson, SC  29634-0975 ({\tt wmalgor@clemson.edu}).
}    
}
\date{}
\begin{document}

\maketitle

\begin{abstract}

This work presents a strongly coupled partitioned method for fluid structure interaction (FSI) problems based on a monolithic formulation of the system which employs a Lagrange multiplier. We prove that both the semi-discrete and fully discrete formulations are well-posed. To derive a partitioned scheme, a Schur complement equation, which implicitly expresses the Lagrange multiplier and the fluid pressure in terms of the fluid velocity and structural displacement, is constructed based on the monolithic FSI system. Solving the Schur complement system at each time step allows for the decoupling of the fluid and structure subproblems, making the method non-iterative between subdomains. We investigate bounds for the condition number of the Schur complement matrix and present initial numerical results to demonstrate the performance of our approach, which attains the expected convergence rates.

\end{abstract}

\section{Introduction.}
Fluid-structure interaction (FSI) problems model systems in which a deformable structure interacts with a surrounding fluid flow. Applications range from hemodynamics \cite{Gerbeau_2003, Quaini_2009} to aeroelasticity \cite{Zhang_2007}, sedimentation \cite{Wang_2009}, and magneto-hydrodynamic flows \cite{Grigoriadis_2009}. 
The fluid in FSI problems is known to act as an "added mass" on the structure \cite{Causin_2006}. This can pose numerical challenges, particularly when the densities of the fluid and structure are similar, as in hemodynamics. In such cases, the added mass effect can cause certain algorithms to become unstable, presenting extra difficulties in the numerical solution of the problem \cite{Gerbeau_2003, Quaini_2007}.  However, in applications such as aeroelasticity, the added mass effect does not tend to be a concern as the fluid and structure densities are not similar. For a comprehensive overview of numerical approaches to solving FSI problems, we refer the reader to \cite{Hou_2012}.

Methods for numerically solving FSI problems can be categorized as monolithic or partitioned. In a monolithic scheme, the fluid and structural equations, along with the interface conditions between the subdomains, are grouped together in one joint system and solved as a whole. Monolithic methods are highly stable and, therefore, could be preferable in regimes dominated by the added mass effect. They are considered strongly coupled approaches, as the interface conditions are treated implicitly within the system and thus are satisfied exactly at each time step. However, while monolithic methods generally provide stability, they come at a high computational cost. The resulting systems to solve are large, and these methods can be prohibitive to implement since they require the development of a new solver from scratch instead of utilizing already existent solvers for each of the separate physics. For examples of monolithic formulations and solvers, see \cite{Hay_2015, Hubner_2004}.

Because of these costs, partitioned methods are often preferred, as they allow one to take advantage of specialized solvers that already exist for each of the fluid and structural domains. A partitioned algorithm often requires iterations between each of the two subdomains at each time step in order to satisfy interface conditions. Thus, partitioned methods may be distinguished based on their treatment of the interface conditions. Explicit methods often work well in aeroelastic applications in which the fluid is compressible or there is a very low added mass effect \cite{Fernandez_2011}. However, stability issues have been observed numerically and proven analytically for explicit coupling methods, showing that certain scenarios exist in which an explicit method will never be stable, regardless of the time step chosen \cite{Causin_2006}. Some schemes can be stabilized by the addition of perturbations \cite{Burman_2009, Fernandez_2011}.
To address the stability issues, semi-implicit and implicit coupling schemes are often considered, though they can result in a higher computational cost. For an overview of these approaches, see \cite{Fernandez_2011}. 
Semi-implicit algorithms are often based on the Chorin-Temam projection scheme \cite{Chorin_1968, Temam_1968} for incompressible flows \cite{Ballarin_2017, Fernandez_2005, Nonino_2021}. Typically, these involve some explicit projection step to compute fluid velocity and an implicit step that requires iterations between a fluid and structure problem to solve for updated fluid pressure and structural displacement \cite{Fernandez_2005, He_2019}.
On the other hand, for nonlinear FSI models, implicit coupling leads to a nonlinear problem, which is commonly solved by variations on fixed-point and Newton-type algorithms \cite{Gerbeau_2003, Matthies_2002}. 
Many decoupling approaches utilize a Dirichlet-Neumann type strategy, which requires iterations between subdomains at each time step. Time-dependent stability criteria for Dirichlet-Neumann and Neumann-Dirichlet implicit coupling approaches are developed in \cite{Causin_2006}.

The goal of this work is to develop a partitioned method for FSI systems centered around a Schur complement equation that allows one to solve for a Lagrange multiplier (LM) representing the interfacial flux at each time step and thus decouple the subdomain equations. Our approach is based on a monolithic formulation of the FSI problem, which we will show is well-posed. This approach requires no iterations between subdomains, and subdomains are strongly coupled as the two interface conditions are satisfied exactly at each time step. The contribution of this work lies in the new formulation and its analysis; we couple the fluid pressure to an LM stemming from the flux on the interface in the continuous form. We also examine the condition number of the Schur complement matrix.

We briefly consider a few analogous formulations which also utilize Lagrange multipliers, such as Finite Element Tearing and Interconnecting (FETI), and fictitious domain methods. In FETI, LMs are utilized to enforce continuity of a solution variable over subdomain interfaces. This enforcement may be either at interface nodes (classical LMs), or at reference points set up between the subdomains (localized LMs) \cite{Kwak_2014, Lee_2020, Park_2000}. FETI, as originally developed in \cite{Farhat_1994, Farhat_1991}, is mostly viewed as a domain decomposition strategy to allow for parallelization and computational speed-ups \cite{Marcsa_2013}; thus the LMs arise from discretization into multiple subdomains instead of enforcing a continuous interface condition. Variations of FETI  or localized LMs have been developed, and applications to FSI problems can be seen in \cite{ Li_2012, Ross_2008, Walsh_2004}.
In our formulation, the LM is defined at the continuous level to enforce a Neumann boundary condition over the interface, whereas FETI uses LMs to achieve domain decomposition. Unlike FETI, the Schur complement equation arising from our formulation does not involve Boolean matrices \cite{Li_2012}, as the LMs are defined differently, and the equation contains previous solutions for the fluid velocity and structural displacement on the right hand side.

LMs in fictitious domain methods were originally used to enforce conditions on the boundary of the domain \cite{Glowinski_Dir_1994, Glowinski_NS_1994}. In the FSI context, a fictitious domain approach extends the fluid equations into the structural domain \cite{Gerstenberger_2008}. Defining a reference domain for the structure, continuity of the structural velocity with this extended fluid velocity is enforced over the entire reference domain, and this constraint is often enforced by a LM. As the LM is defined on an entire domain, not just a boundary, this is referred to as a distributed LM method. For an overview of fictitious domain methods and distributed LMs, see \cite{Boffi_2015, Glowinski_2007} and references therein. 

Our formulation of the FSI problems shares some similarities with distributed LM-immersed boundary method (DLM-IBM) for FSI problems, as introduced in \cite{Boffi_2015, Boffi_2017}, which utilizes this distributed LM following the idea of a fictitious domain method. Their use of semi-implicit discretization in time gives unconditional stability, as opposed to FE-IBM which is restricted by a Courant-Friedrichs-Lewy (CFL) condition. A rigorous analysis to show the stability of the saddle point formulation and convergence of the discrete solution is provided in \cite{Boffi_2017, Boffi_2022}, and recent work also examines parallel solvers for this formulation \cite{Boffi_2022_Parallel1, Boffi_2023_Parallel2}.
Unlike DLM-IBM, our approach defines the LM only along the interface as opposed to an entire reference domain. This should result in fewer degrees of freedom for the LM. We group the fluid pressure with the LM in the saddle point formulation instead of grouping the pressure with the fluid velocity and structural displacement. In addition, we do not use a fictitious domain approach, so our structure and fluid variables are defined only in their respective subdomains. 

The method we present here is based on the ideas in \cite{de_Castro_2022, Peterson_2019} and \cite{Sockwell_2020}, which, to the best of our knowledge, has only been applied to a transmission problem with a partitioned domain.  Our partitioned approach begins from a monolithic formulation of the problem by using a Lagrange multiplier to treat the interface Neumann condition as a new variable and the continuity of velocity across the interface as an independent equation in the system. We more closely follow \cite{Sockwell_2020}, which discretizes the monolithic system in time first.
The Schur complement equation we derive implicitly expresses a variable representing the Lagrange multiplier and fluid pressure in terms of the fluid velocity and structural displacement. Solving this Schur complement equation allows for the decoupling of the fluid and structural subdomains and their independent solution at each time step, without requiring iterations between them. As opposed to general domain decomposition methods for FSI, this non-iterative approach could be beneficial in allowing for time-independent matrices and a strongly coupled, instead of loosely coupled, scheme. 

In this paper, we show the well-posedness of our formulation and examine the conditioning of the Schur complement matrix. The structure is as follows: in Section \ref{sec:Continuous Model}, we present the model equations and derive the monolithic formulation in the fully continuous setting, showing the well-posedness of the continuous-in-space formulation. In Section \ref{sec:FullyDiscreteModel}, we examine the well-posedness of the fully discrete saddle point problem, which we present in Section \ref{sec:IFRsec} along with the proposed partitioned method. The conditioning of the Schur complement matrix is examined in Section \ref{sec:Conditioning}, and numerical results showing the method's convergence in space and time, as well as the conditioning of the Schur complement system, are shown in Section \ref{sec:Numerical}. Lastly, in Section \ref{sec:Conclusion}, we provide a summary and steps for future work.

\section{Model Equations and Semi-Discrete Model.}\label{sec:Continuous Model}

In this section, we present the semi-discrete monolithic formulation and consider its well-posedness. The FSI problem we study couples an incompressible Newtonian fluid with a linear elastic structure. For ease of developing the partitioned method  and rigorous analysis, we make two simplifications to the general FSI model: utilizing the linear Stokes equations instead of the nonlinear Navier-Stokes equations and considering a fixed domain instead of a moving domain. Future work will consider the general model.

We use the following notation throughout this paper.
Let $\Omega_f, \Omega_s \in \mathbb{R}^d$, for $d=2,3$, refer to the physical fluid and structure domains, respectively, with shared, non-overlapping interface $\gamma$. Outward normal vectors to these domains are represented by $\bm{n_f}$ and $\bm{n_s}$. The Lipschitz continuous boundaries of each domain are denoted by $\Gamma^f, \Gamma^s$, where $\Gamma^f = \Gamma^f_N \cup \Gamma^f_D \cup \gamma$ and $\Gamma^s = \Gamma^s_N \cup \Gamma^s_D \cup \gamma$. The subscripts ``N'' and ``D'' refer to portions of the boundary on which Neumann and Dirichlet boundary conditions are respectively defined. Assume the measure of $\Gamma_N$ and $\Gamma_D$ are nonzero on both domains. We denote the temporal domain by $(0,T)$, where $T$ is a given final time. 

The notation $(\cdot,\cdot) = (\cdot,\cdot)_{\omega}$ is employed to represent the $L^2$ inner product on a subdomain or interface $\omega$ and $\langle \cdot, \cdot \rangle_\omega$ to denote the duality pairing between $H^{-1}(\omega)$ and $H^1(\omega)$ or $H^{-1/2}(\omega)$ and $H^{1/2}(\omega)$ on the appropriate subdomain or interface $\omega$. Additionally, we utilize the following $H^1$ norm for a generic function $\bm{g}$ in $\omega$: $||\bm{g}||^2_{1,\omega} := ||\bm{g}||_{0,\omega}^2 + ||D(\bm{g})||_{0,\omega}^2 $.

\noindent The continuous FSI system can be written as follows.

\noindent \textit{Find  $\bm{u} \in \Omega_f \times (0,T) \mapsto \mathbb{R}^d, \hspace{1mm} p \in \Omega_f \times (0,T) \mapsto \mathbb{R}, \hspace{1mm} \bm{\eta} \in \Omega_s \times (0,T) \mapsto \mathbb{R}^d $ such that}
\begin{align}
\rho_f  \frac{\partial \bm{u}}{\partial t} - 2 \nu_f \nabla \cdot D(\bm{u}) + \nabla p &= \bm{f_f} \hspace{5mm} \text{ in }\Omega_f \times (0,T),  \label{StokesMom} \\
\nabla \cdot \bm{u} &= 0 \hspace{8mm} \text{ in }\Omega_f \times (0,T),  \label{StokesMass} \\
\rho_s \frac{\partial^2 \bm{\eta}}{\partial t^2} - 2\nu_s \nabla \cdot D(\bm{\eta}) - \lambda \nabla (\nabla \cdot \bm{\eta}) &= \bm{f_s} \hspace{6mm} \text{ in }\Omega_s \times (0,T). \label{Structure}
\end{align}
Appropriate initial conditions are given, as well as both Dirichlet and Neumann boundary conditions.
\begin{align}
\begin{split}\label{noninterBCs}
(2\nu_f D(\bm{u}) - p) \cdot \bm{n_f} &= \bm{u_N} \hspace{5mm} \text{ on } \Gamma_N^f  \times (0,T), \hspace{8mm}
\bm{u} = \bm{0} \hspace{5mm} \text{ on } \Gamma_D^f \times (0,T), \\
(2\nu_s D(\bm{\eta}) + \lambda (\nabla \cdot \bm{\eta})) \cdot \bm{n_s} &= \bm{\eta_N} \hspace{5mm} \text{ on } \Gamma_N^s \times (0,T), \hspace{8mm}
\bm{\eta} = \bm{0} \hspace{5mm} \text{ on } \Gamma_D^s \times (0,T).
\end{split}
\end{align}
Here, $\bm{u}(\bm{x},t)$ denotes the fluid velocity, $p(\bm{x},t)$ the fluid pressure, and $\bm{\eta}(\bm{x},t)$ the displacement of the structure. The functions $\bm{f_f}, \bm{f_s}, \bm{u_N}, \bm{\eta_N}$ are given body forces and Neumann boundary conditions. The operator $D(\cdot)$ is the rate of strain tensor, defined as $D(\bm{v}) := \frac{1}{2}(\nabla \bm{v} + (\nabla \bm{v})^T)$. The constants in equations \eqref{StokesMom} - \eqref{Structure} are $\rho_f$ denoting the fluid density, $\rho_s$ the structure density, and $\nu_f$ the fluid viscosity. For the structure equation \eqref{Structure}, the Lam\'e parameters are represented by $\nu_s, \lambda$.  All constants are assumed positive. \\
\noindent The conditions for $\bm{u}, \bm{\eta}$ given on the interface $\gamma$ are obtained from enforcing continuity of velocity and continuity of the stress force.
\begin{align}
\frac{\partial \bm{\eta}}{\partial t} &= \bm{u} \hspace{5mm} \text{ on } \gamma \times (0,T), \label{Inter1}\\
(2\nu_f D(\bm{u}) - p) \cdot \bm{n_f} &= -(2\nu_s D(\bm{\eta}) + \lambda (\nabla \cdot \bm{\eta})) \cdot \bm{n_s} \hspace{5mm} \text{ on } \gamma \times (0,T). \label{Inter2}
\end{align}
Define the following spaces: 
\begin{align}\label{ContSpaces}
\begin{split}
U &:=\{\bm{v} \in \bm{H^1}(\Omega_f): \bm{v} = \bm{0} \text{ on } \Gamma_D^f\}, \hspace{5mm}
Q :=L^2(\Omega_f), \hspace{5mm}
X := \{\bm{\varphi} \in \bm{H^1}(\Omega_s): \bm{\varphi} = \bm{0} \text{ on } \Gamma_D^s\}.
\end{split}
\end{align}

Let $\bm{g}$ be a Lagrange multiplier associated with the interface condition \eqref{Inter2}, such that on $\gamma$,\\ $\bm{g} := (2\nu_f D(\bm{u}) - p) \cdot \bm{n_f}$. This implies that $(2\nu_s D(\bm{\eta}) + \lambda (\nabla \cdot \bm{\eta})) \cdot \bm{n_s} = -\bm{g}$. Take $\Lambda := H^{-1/2}(\gamma)$ to be the Lagrange multiplier space.
To derive a weak formulation of \eqref{StokesMom}-\eqref{Inter2}, we multiply \eqref{StokesMom}-\eqref{Structure} by appropriate test functions $\bm{v} \in U, q \in Q, \bm{\varphi} \in X$, multiply the constraint equation \eqref{Inter1} by a test function $\bm{s} \in \Lambda$, and integrate each equation. We also substitute the Lagrange multiplier $\bm{g}$ in the relevant boundary condition terms to obtain the following weak form.

\noindent \textit{Find $\bm{u} \in U, p \in Q, \bm{\eta} \in X, \bm{g} \in \Lambda$ such that}
\begin{align}
\begin{split} \label{WeakForm}
\rho_f \Big( \frac{\partial \bm{u}}{\partial t}, \bm{v} \Big)_{\Omega_f} + 2 \nu_f \left( D(\bm{u}), D(\bm{v}) \right)_{\Omega_f} - (p, \nabla \cdot \bm{v})_{\Omega_f} - \langle\bm{g},\bm{v}\rangle_\gamma  = \langle\bm{f_f},\bm{v}\rangle_{\Omega_f} + \langle\bm{u}_N, \bm{v}\rangle_{\Gamma_N^f} \hspace{3mm} \forall \bm{v} \in U, \\
( \nabla \cdot \bm{u}, q )_{\Omega_f} = 0 \hspace{3mm} \forall q \in Q, \\
\rho_s \Big( \frac{\partial^2 \bm{\eta}}{\partial t^2}, \bm{\varphi} \Big)_{\Omega_s} +  2 \nu_s \left( D(\bm{\eta}), D(\bm{\varphi}) \right)_{\Omega_s} + \lambda ( \nabla \cdot \bm{\eta}, \nabla \cdot \bm{\varphi} )_{\Omega_s} + \langle\bm{g}, \bm{\varphi}\rangle_\gamma  = \langle\bm{f_s}, \bm{\varphi}\rangle_{\Omega_s} + \langle\bm{\eta}_N, \bm{\varphi}\rangle_{\Gamma_N^s} \hspace{3mm} \forall \bm{\varphi} \in X, \\
\Bigl \langle \frac{\partial \bm{\eta}}{\partial t}, \bm{s} \Bigr \rangle_\gamma - \langle\bm{u}, \bm{s}\rangle_\gamma = 0 \hspace{3mm} \forall \bm{s} \in \Lambda.
\end{split}
\end{align}

\noindent Define the spaces $Y = U \times X$, and $Z = Q \times \Lambda$, with corresponding norms:
\begin{align*}
    ||\bm{y}||_Y^2 = ||(\bm{u},\bm{\eta})||_Y^2 &:= ||\bm{u}||_{1,\Omega_f}^2 + ||\bm{\eta}||_{1,\Omega_s}^2, \\
    ||\bm{z}||_Z^2 = ||(p,\bm{g})||_Z^2 &:= ||p||_{0,\Omega_f}^2 + ||\bm{g}||^2_{-1/2,\gamma}.
\end{align*}
Using first order backward difference approximations for the time derivatives $\dot{\bm{u}} :=\frac{\partial \bm{u}}{\partial t}$ and $\ddot{\bm{\eta}}:=\frac{\partial^2 \bm{\eta}}{\partial t^2}$,

\begin{align*}
\dot{\bm{u}} &= \frac{\bm{u}^{n+1} - \bm{u}^n}{\Delta t}, \hspace{8mm}
\ddot{\bm{\eta}} = \frac{\bm{\eta}^{n+1} - 2\bm{\eta}^n + \bm{\eta}^{n-1}}{\Delta t^2}, 
\end{align*}
we discretize \eqref{WeakForm} in time using Backward Euler:
\begin{align}\label{WeakFormTimeDisc}
\begin{split}
\rho_f \left( \bm{u}^{n+1}, \bm{v} \right)_{\Omega_f} + \Delta t  2 \nu_f &\left( D(\bm{u}^{n+1}), D(\bm{v}) \right)_{\Omega_f} - \Delta t(p^{n+1}, \nabla \cdot \bm{v})_{\Omega_f}  - \Delta t\langle\bm{g}^{n+1},\bm{v}\rangle_\gamma  \\
&= \Delta t\langle \bm{f_f}^{n+1},\bm{v}\rangle_{\Omega_f}  + \Delta t\langle\bm{u}_N^{n+1}, \bm{v}\rangle_{\Gamma_N^f} + \rho_f \left( \bm{u}^n, \bm{v} \right)_{\Omega_f} \hspace{3mm} \forall \bm{v} \in U, \\
( \nabla \cdot \bm{u}^{n+1}, q )_{\Omega_f} &= 0 \hspace{3mm} \forall q \in Q, \\
\frac{1}{\Delta t} \rho_s \left( \bm{\eta}^{n+1}, \bm{\varphi} \right)_{\Omega_s} +  2 \Delta t \nu_s &\left( D(\bm{\eta}^{n+1}), D(\bm{\varphi}) \right)_{\Omega_s} + \Delta t \lambda ( \nabla \cdot \bm{\eta}^{n+1}, \nabla \cdot \bm{\varphi} )_{\Omega_s} + \Delta t \langle\bm{g}^{n+1}, \bm{\varphi}\rangle_\gamma  \\&= \Delta t\langle \bm{f_s}^{n+1}, \bm{\varphi}\rangle_{\Omega_s} + \Delta t\langle\bm{\eta}_N^{n+1}, \bm{\varphi}\rangle_{\Gamma_N^s}+ \frac{1}{\Delta t}\rho_s \left( 2\bm{\eta}^n - \bm{\eta}^{n-1}, \bm{\varphi} \right)_{\Omega_s}  \hspace{3mm} \forall \bm{\varphi} \in X, \\
\frac{1}{\Delta t} \langle \bm{\eta}^{n+1}, \bm{s} \rangle_\gamma - \langle\bm{u}^{n+1}, \bm{s}\rangle_\gamma &= \frac{1}{\Delta t} \langle \bm{\eta}^n, \bm{s} \rangle_\gamma \hspace{3mm} \forall \bm{s} \in \Lambda.
\end{split}
\end{align}

\noindent This formulation suggests the bilinear forms $a(\bm{y_1},\bm{y_2}): \hspace{1mm} Y \times Y \mapsto \mathbb{R}$ and $b(\bm{y},\bm{z}): \hspace{1mm} Y \times Z \mapsto \mathbb{R},$ where
\begin{align}\label{eq:bilinearForms}
\begin{split}
a\left( (\bm{u},\bm{\eta}), (\bm{v}, \bm{\varphi}) \right)  &:= \rho_f (\bm{u},\bm{v})_{\Omega_f} + \Delta t 2\nu_f  (D(\bm{u}),D(\bm{v}))_{\Omega_f} \\& \hspace{5mm} + \rho_s(\bm{\eta},\bm{\varphi})_{\Omega_s} + \Delta t^2 2 \nu_s (D(\bm{\eta}),D(\bm{\varphi}))_{\Omega_s} + \Delta t^2 \lambda (\nabla \cdot \bm{\eta}, \nabla \cdot \bm{\varphi})_{\Omega_s}, \\
b\left((\bm{v}, \bm{\phi}), (q, \bm{s}) \right) &:= b_\gamma\left( (\bm{v},\bm{\phi}), \bm{s}\right) + b_p(\bm{v},q),
\end{split}
\end{align}
and $b_\gamma\left( (\bm{v},\bm{\phi}) ,s\right) := \langle\bm{\phi},\bm{s}\rangle_\gamma -\langle\bm{v},\bm{s}\rangle_\gamma, \hspace{3mm}
b_p(\bm{v},q) := -(\nabla \cdot \bm{v}, q)_{\Omega_f}.$
Then the FSI problem \eqref{WeakFormTimeDisc} may be stated as the following saddle point system. 
\textit{
Find $\Big((\bm{u}^{n+1}, \bm{\eta}^{n+1}), (p^{n+1},\bm{g}^{n+1})\Big) \in Y \times Z$ such that}
\begin{align}\label{eq:SaddlePt}
\begin{split}
a((\bm{u}^{n+1},\frac{1}{\Delta t}\bm{\eta}^{n+1}),(\bm{v},\bm{\varphi})) + b((\bm{v},\bm{\varphi}),(\Delta t p^{n+1},\Delta t \bm{g}^{n+1})) &= \mathcal{F}_1(\bm{v},\bm{\varphi}) \hspace{5mm} \forall \hspace{1mm} (\bm{v},\bm{\varphi}) \in Y, \\
b((\bm{u}^{n+1},\frac{1}{\Delta t}\bm{\eta}^{n+1}),(q,\bm{s})) &= \mathcal{F}_2(q,\bm{s}) \hspace{5mm} \forall \hspace{1mm} (q,\bm{s}) \in Z,
\end{split}
\end{align}
where $\mathcal{F}_1(\bm{v},\bm{\varphi}) = \Delta t\langle \bm{f_f}^{n+1},\bm{v}\rangle_{\Omega_f}  + \Delta t\langle\bm{u}_N^{n+1}, \bm{v}\rangle_{\Gamma_N^f} + \rho_f \left( \bm{u}^n, \bm{v} \right)_{\Omega_f} + \Delta t  \langle \bm{f_s}^{n+1}, \bm{\varphi}\rangle_{\Omega_s} + \Delta t \langle\bm{\eta}_N^{n+1}, \bm{\varphi}\rangle_{\Gamma_N^s}+ \frac{\rho_s}{\Delta t} \left( 2\bm{\eta}^n - \bm{\eta}^{n-1}, \bm{\varphi} \right)_{\Omega_s}, $ and $\mathcal{F}_2(q,\bm{s}) = \frac{1}{\Delta t}\langle\bm{\eta}^n,\bm{s}\rangle_\gamma$.\\

Next, we examine the well-posedness of the semi-discrete model \eqref{eq:SaddlePt}. Our proofs follow the structure of the proofs from \cite{Gunzburger_1992} for a Navier-Stokes problem which uses a Lagrange multiplier to implement inhomogeneous Dirichlet boundary conditions.
\noindent For the saddle point problem \eqref{eq:SaddlePt} to be well-posed, we need to show the coercivity of $a(\cdot,\cdot)$ and the inf-sup condition between $Y$ and $Z$ for the form $b(\cdot,\cdot)$ (see, for example, \cite{Braess_1997}, page 127, Thm. 4.3).

\noindent The coercivity of $a(\cdot,\cdot)$ can be easily shown.
Let $a((\bm{u},\bm{\eta}),(\bm{v},\bm{\varphi}))$ be defined as in \eqref{eq:bilinearForms}. Then
\begin{align}
    a\left( (\bm{v},\bm{\varphi}), (\bm{v}, \bm{\varphi}) \right)  &:= \rho_f (\bm{v},\bm{v})_{\Omega_f} + \Delta t 2\nu_f (D(\bm{v}),D(\bm{v}))_{\Omega_f} + \rho_s(\bm{\varphi},\bm{\varphi})_{\Omega_s} + \Delta t^2 2 \nu_s (D(\bm{\varphi}),D(\bm{\varphi}))_{\Omega_s} \nonumber \\
    & \hspace{1in} + \Delta t^2 \lambda (\nabla \cdot \bm{\varphi}, \nabla \cdot \bm{\varphi})_{\Omega_s} \nonumber\\
    &= \rho_f||\bm{v}||_0^2 + \Delta t 2 \nu_f ||D(\bm{v})||_0^2 + \rho_s ||\bm{\varphi}||_0^2 + \Delta t^2 2 \nu_s||D(\bm{\varphi})||_0^2 + \Delta t^2 \lambda ||\nabla \cdot \bm{\varphi}||_0^2 \nonumber\\
    &\geq \min \{\rho_f, \Delta t 2\nu_f \} ||\bm{v}||_1^2 + \min \{\rho_s, \Delta t^2 2 \nu_s\} ||\bm{\varphi}||_1^2 +  \Delta t^2 \lambda ||\nabla \cdot \bm{\varphi}||_0^2 \nonumber \\
    &\geq \min\{\rho_f,\Delta t 2\nu_f , \rho_s, \Delta t^2 2\nu_s\}||(\bm{v},\bm{\varphi})||_Y^2.
    \label{a-coercivity}
\end{align}

\noindent Now, we prove the inf-sup condition given in Theorem \ref{thm:ContInfSup} between $Y$ and $Z$. 
 First, define the space $$U_{\gamma 0} := \{\bm{v} \in U : \bm{v} = \bm{0} \text{ on } \gamma \}.$$  It has been shown (see \cite{Du_2003}, \cite{Charina_2004}) that an inf-sup condition is satisfied between the spaces $U_{\gamma 0}$ and $Q$. 
\begin{equation}\label{KQinfsup}
\text{There exists a } \bm{v}^*\neq 0 \in U_{\gamma 0} \text{ and } \beta^* > 0 \text{ such that } (\nabla \cdot \bm{v}^*,q) \geq \beta^* ||\bm{v}^*||_1 ||q||_0 \hspace{5mm} \forall q \in Q.
\end{equation}
Since $U_{\gamma 0}$ is a subspace of U, the above inf-sup condition also holds between U and Q. \\
Next, the divergence operator, $\bm{w} \mapsto \nabla \cdot \bm{w}$ has been shown to map $U_{\gamma 0}$ onto $Q$ \cite{Charina_2004}.
Thus, for any $q \in Q$, we may find a solution $\bm{w} \in U_{\gamma 0}$ to the problem 
\begin{align}\label{wExist-1}
\nabla \cdot \bm{w} &= q \hspace{5mm} \text{ in }\Omega_f .
\end{align}
The following lemma is needed for the proof of the inf-sup condition.
\begin{lemma}\label{lemma:bpExist}
For a given $\tilde{q} \in Q, \bm{s}^* \in H^{1/2}(\gamma)$, the following problem has a solution $\tilde{\bm{u}} \in U$:
\begin{align}\label{eq:bp}
\begin{split}
b_p(\bm{\tilde{u}},q) &= (\tilde{q},q)_{\Omega_f} \hspace{3mm} \forall q \in Q, \\
\bm{\tilde{u}}|_\gamma &= -\bm{s}^*,
\end{split}
\end{align} 
with
\begin{equation}\label{eq:uBound}
||\bm{\tilde{u}}||_1 \leq C\Big(||\tilde{q}||_0 + ||\bm{s}^*||_{1/2,\gamma} \Big).
\end{equation} 
\end{lemma}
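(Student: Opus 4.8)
The plan is to construct $\tilde{\bm{u}}$ by splitting it into two pieces: one piece that carries the prescribed interface trace $-\bm{s}^*$, and one piece, living in the subspace $U_{\gamma 0}$ on which the divergence is known to be surjective, that corrects the divergence to the desired value. The key point is that I should peel off the trace condition first, so that what remains is a pure divergence problem of the type already quoted around \eqref{wExist-1}.

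First I would lift the boundary data. Since $\bm{s}^* \in H^{1/2}(\gamma)$ and the trace operator from $\bm{H^1}(\Omega_f)$ to $\bm{H^{1/2}}(\partial\Omega_f)$ admits a bounded right inverse, I would produce a function $\bm{u}_1 \in U$ whose trace on $\gamma$ equals $-\bm{s}^*$, whose trace on $\Gamma_D^f$ vanishes (so that indeed $\bm{u}_1 \in U$), and which satisfies $||\bm{u}_1||_1 \leq C\,||\bm{s}^*||_{1/2,\gamma}$. Second, I would handle the divergence. Setting $\hat{q} := -\tilde{q} - \nabla \cdot \bm{u}_1 \in Q$, the surjectivity of $\bm{w} \mapsto \nabla \cdot \bm{w}$ from $U_{\gamma 0}$ onto $Q$ (stated in the text preceding \eqref{wExist-1}, citing \cite{Charina_2004}) yields $\bm{u}_0 \in U_{\gamma 0}$ with $\nabla \cdot \bm{u}_0 = \hat{q}$ and $||\bm{u}_0||_1 \leq C\,||\hat{q}||_0 \leq C\big(||\tilde{q}||_0 + ||\bm{u}_1||_1\big)$.

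Third, I would set $\tilde{\bm{u}} := \bm{u}_0 + \bm{u}_1$ and verify the claims. Since $\bm{u}_0$ vanishes on $\gamma$, the trace of $\tilde{\bm{u}}$ on $\gamma$ is $-\bm{s}^*$; both summands vanish on $\Gamma_D^f$, so $\tilde{\bm{u}} \in U$; and $\nabla \cdot \tilde{\bm{u}} = \hat{q} + \nabla \cdot \bm{u}_1 = -\tilde{q}$, whence $b_p(\tilde{\bm{u}},q) = -(\nabla \cdot \tilde{\bm{u}}, q)_{\Omega_f} = (\tilde{q},q)_{\Omega_f}$ for all $q \in Q$. Combining the two bounds through the triangle inequality gives $||\tilde{\bm{u}}||_1 \leq ||\bm{u}_0||_1 + ||\bm{u}_1||_1 \leq C\big(||\tilde{q}||_0 + ||\bm{s}^*||_{1/2,\gamma}\big)$, which is \eqref{eq:uBound}.

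The main obstacle I expect is twofold. First, in the lifting step one must confirm that the composite boundary datum (equal to $-\bm{s}^*$ on $\gamma$ and $\bm{0}$ on $\Gamma_D^f$) is a genuine element of $\bm{H^{1/2}}(\partial\Omega_f)$ with a bounded extension; this requires a compatibility condition at the junction of $\gamma$ and $\Gamma_D^f$ (e.g. $\bm{s}^*$ vanishing suitably there, or the use of $H^{1/2}_{00}$-type spaces), and I would either impose the geometric assumption that these portions are separated or invoke the appropriate trace theory to cover the corner. Second, it is essential to note that no zero-mean constraint on $\hat{q}$ is needed: because $\bm{u}_0$ is only required to vanish on $\gamma \cup \Gamma_D^f$ and not on $\Gamma_N^f$, the flux $\int_{\Gamma_N^f}\bm{u}_0\cdot\bm{n}$ is free, so the divergence operator reaches all of $Q$ rather than its zero-mean subspace. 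This is precisely where the standing assumption that $\Gamma_N^f$ has nonzero measure enters, and I would make sure the surjectivity statement I invoke from \eqref{wExist-1} already encodes it.
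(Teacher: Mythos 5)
Your proposal is correct and uses the same two ingredients as the paper's proof --- a bounded $H^{1/2}(\gamma)$ trace lifting into $U$ and the surjectivity (with bound) of the divergence operator from $U_{\gamma 0}$ onto $Q$ --- merely applied in the reverse order: you lift the trace first and then correct the divergence with an element of $U_{\gamma 0}$, whereas the paper first matches the divergence and then adds a divergence-free extension of the compensating trace data. The argument is sound, and your remarks about junction compatibility of the composite boundary datum and about why no zero-mean constraint on the divergence is needed (the nonzero measure of $\Gamma_N^f$) are both pertinent and apply equally to the paper's own construction.
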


\begin{proof}
Let $\tilde{q} \in Q, \bm{s}^* \in H^{1/2}(\gamma)$ be given. From the inf-sup condition between U and Q and the surjectiveness of the divergence operator stated previously, we may find a $\tilde{\bm{u}}_1 \in U$ satisfying 
$\nabla \cdot \tilde{\bm{u}}_1  = \tilde{q} \in \Omega_f$, which implies 
\begin{align}
(\nabla \cdot \tilde{\bm{u}}_1 ,q) &= (\tilde{q},q) \hspace{5mm} \forall q \in Q,  \label{wExist} \\
(\nabla \cdot \tilde{\bm{u}}_1 ,q) &\geq \beta^* ||\tilde{\bm{u}}_1 ||_1 ||q||_0 \hspace{5mm} \forall q \in Q. \label{infsupKQ}
\end{align}
By these properties of $\tilde{\bm{u}}_1$, \eqref{wExist} and \eqref{infsupKQ},  we see
$$
||\tilde{q}||_0^2 = (\tilde{q},\tilde{q}) 
= (\nabla \cdot \tilde{\bm{u}}_1, \tilde{q}) 
\geq \beta^* ||\tilde{\bm{u}}_1||_1 ||\tilde{q}||_0,
$$
and, therefore,
\begin{equation} \label{qboundu1}
||\tilde{q}||_0 \geq  \beta^* ||\tilde{\bm{u}}_1||_1.
\end{equation}

Now, define $\overline{\bm{w}} \in U$ such that $\overline{\bm{w}}|_\gamma = -\bm{s}^* + \tilde{\bm{u}}_1|_\gamma$. 
Since div$(\overline{\bm{w}}) \in Q$, the surjectivity of the divergence operator gives the existence of a  $\bm{v} \in U_{\gamma 0}$ such that 
\begin{align}\label{vprops}
\begin{split}
(\nabla \cdot \bm{v},q) = (\nabla \cdot \overline{\bm{w}}, q) \hspace{3mm} \forall q \in Q, \hspace{2mm} \text{ and } \hspace{2mm}
||\bm{v}||_1 &\leq C ||\nabla \cdot \overline{\bm{w}}||_0.
\end{split}
\end{align} 
Take $\tilde{\bm{u}}_2: = \overline{\bm{w}} - \bm{v}$. Note that $\tilde{\bm{u}}_2 \in U$. Then, 

\begin{align}
\begin{split}\label{u2props}
(\nabla \cdot \tilde{\bm{u}}_2, q) &= (\nabla \cdot \overline{\bm{w}}, q) - (\nabla \cdot \bm{v}, q) = 0 \hspace{5mm} \forall q \in Q, \\
\tilde{\bm{u}}_2|_\gamma &= \overline{\bm{w}}|_\gamma - \bm{v}|_\gamma = -\bm{s}^* + \tilde{\bm{u}}_1|_\gamma.
\end{split} 
\end{align}
as $\bm{v} \in U_{\gamma 0}$.
The norm of $\tilde{\bm{u}}_2$ can be bounded as follows using \eqref{vprops}:
\begin{equation}\label{normu2bound}
||\tilde{\bm{u}}_2||_1 \,\leq \, ||\overline{\bm{w}}||_1 + ||\bm{v}||_1 
\, \leq \, ||\overline{\bm{w}}||_1 + C ||\nabla \cdot \overline{\bm{w}}||_0   
\,\leq \, C ||\overline{\bm{w}}||_1.
\end{equation}
Taking the infimum of \eqref{normu2bound} over all $\overline{\bm{w}} \in U$ satisfying $\overline{\bm{w}}|_\gamma = -\bm{s}^* + \tilde{\bm{u}}_1|_\gamma$ gives:

\begin{equation}\label{normu2bounduse}
||\tilde{\bm{u}}_2||_1 \leq C||-\bm{s}^* + \tilde{\bm{u}}_1||_{1/2,\gamma}.
\end{equation}
Now, define $\tilde{\bm{u}} \in U$ to be $\tilde{\bm{u}}:= -\tilde{\bm{u}}_1 + \tilde{\bm{u}}_2$. By \eqref{wExist} and \eqref{u2props}, we have
\begin{align*}
(\nabla \cdot \tilde{\bm{u}}, q) &= (\nabla \cdot (-\tilde{\bm{u}}_1), q) + (\nabla \cdot \tilde{\bm{u}}_2, q) = -(\tilde{q},q) \hspace{5mm} \forall q \in Q,  \\
\tilde{\bm{u}}|_\gamma &= -\tilde{\bm{u}}_1|_\gamma + \tilde{\bm{u}}_2|_\gamma =  -\tilde{\bm{u}}_1|_\gamma - \bm{s}^* +  \tilde{\bm{u}}_1|_\gamma = -\bm{s}^*. 
\end{align*}
Also, \eqref{qboundu1} and \eqref{normu2bounduse} yield 
\begin{align*}
||\tilde{\bm{u}}||_1 &\leq ||\tilde{\bm{u}}_1||_1 + || \tilde{\bm{u}}_2||_1 \leq \Big(\frac{1}{ \beta^*} ||\tilde{q}||_0 + C||-\bm{s}^* + \tilde{\bm{u}}_1||_{1/2,\gamma} \Big)\\
&\leq C (||\tilde{q}||_0 + ||\bm{s}^*||_{1/2,\gamma} + ||\tilde{\bm{u}}_1||_{1/2,\gamma} ) \leq C (||\tilde{q}||_0 + ||\bm{s}^*||_{1/2,\gamma} + ||\tilde{\bm{u}}_1||_{1} )\\
&\leq C ( ||\tilde{q}||_0 + ||\bm{s}^*||_{1/2,\gamma}). 
\end{align*}
\end{proof}

\begin{theorem}\label{thm:ContInfSup}
There exists a positive constant $\beta$ such that
\begin{align}\label{infsup}
\underset{\bm{0} \neq \bm{y} \in Y}{\sup} \frac{b(\bm{y}; \bm{z})}{||\bm{y}||_Y} \geq \beta ||\bm{z}||_Z > 0 \hspace{3mm} \forall \bm{z} \in Z.
\end{align}
\end{theorem}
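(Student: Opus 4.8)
The plan is to verify the inf-sup condition constructively: given an arbitrary $\bm{z} = (p, \bm{g}) \in Z$ with $\bm{z} \neq \bm{0}$, I will exhibit a single $\bm{y} \in Y$ for which $b(\bm{y},\bm{z})$ is bounded below by a constant multiple of $\|\bm{z}\|_Z^2$, while $\|\bm{y}\|_Y$ is bounded above by a constant multiple of $\|\bm{z}\|_Z$; dividing then gives the claim, since the supremum over $Y$ dominates the value at this particular $\bm{y}$. The key observation is that the fluid velocity alone can be made to absorb both components of $\bm{z}$, so I may simply take $\bm{\eta} = \bm{0}$ in $\bm{y} = (\bm{u},\bm{\eta})$ and let Lemma \ref{lemma:bpExist} do the work.

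First I would realize the dual norm of $\bm{g}$. By the definition of the $H^{-1/2}(\gamma)$ norm as the dual of $H^{1/2}(\gamma)$, there is a near-maximizer $\bm{s}^* \in H^{1/2}(\gamma)$ which, after scaling, satisfies $\|\bm{s}^*\|_{1/2,\gamma} = \|\bm{g}\|_{-1/2,\gamma}$ and $\langle \bm{g}, \bm{s}^*\rangle_\gamma \geq \tfrac{1}{2}\|\bm{g}\|_{-1/2,\gamma}^2$ (the constant $\tfrac{1}{2}$ is arbitrary and only affects the final $\beta$). Next I would apply Lemma \ref{lemma:bpExist} with $\tilde{q} = p$ and this $\bm{s}^*$ to obtain $\tilde{\bm{u}} \in U$ satisfying $b_p(\tilde{\bm{u}},q) = (p,q)_{\Omega_f}$ for all $q \in Q$, the trace identity $\tilde{\bm{u}}|_\gamma = -\bm{s}^*$, and the stability bound $\|\tilde{\bm{u}}\|_1 \leq C\big(\|p\|_0 + \|\bm{g}\|_{-1/2,\gamma}\big)$.

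With $\bm{y} := (\tilde{\bm{u}}, \bm{0}) \in Y$, I would evaluate the two pieces of $b$ against $\bm{z} = (p,\bm{g})$. The pressure part gives $b_p(\tilde{\bm{u}},p) = (p,p)_{\Omega_f} = \|p\|_0^2$ directly from the lemma. For the interface part, since $\bm{\phi} = \bm{0}$ and $\tilde{\bm{u}}|_\gamma = -\bm{s}^*$, one has $b_\gamma\big((\tilde{\bm{u}},\bm{0}),\bm{g}\big) = -\langle \tilde{\bm{u}},\bm{g}\rangle_\gamma = \langle \bm{g},\bm{s}^*\rangle_\gamma \geq \tfrac{1}{2}\|\bm{g}\|_{-1/2,\gamma}^2$. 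Adding, $b(\bm{y},\bm{z}) \geq \|p\|_0^2 + \tfrac{1}{2}\|\bm{g}\|_{-1/2,\gamma}^2 \geq \tfrac{1}{2}\|\bm{z}\|_Z^2$, while $\|\bm{y}\|_Y = \|\tilde{\bm{u}}\|_1 \leq C\big(\|p\|_0 + \|\bm{g}\|_{-1/2,\gamma}\big) \leq C\sqrt{2}\,\|\bm{z}\|_Z$. Hence the quotient at this $\bm{y}$ is at least $\beta\|\bm{z}\|_Z$ with $\beta = 1/(2\sqrt{2}\,C)$, which yields the stated bound; one checks $\bm{y} \neq \bm{0}$ because $\bm{z} \neq \bm{0}$ forces either $\nabla\cdot\tilde{\bm{u}} = -p \neq 0$ or a nonzero trace $-\bm{s}^*$.

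The only genuine obstacle is packaged into Lemma \ref{lemma:bpExist}: one needs a velocity field that simultaneously has a prescribed divergence (to control $\|p\|_0$) and a prescribed trace on $\gamma$ (to control $\|\bm{g}\|_{-1/2,\gamma}$), together with the correct stability estimate. Since that lemma is already established, the remaining work here is bookkeeping — selecting the near-maximizer $\bm{s}^*$ and tracking constants — and is routine. The choice $\bm{\eta} = \bm{0}$ is what makes the argument clean, and it is worth remarking that it shows the multiplier and pressure are controlled entirely through the fluid velocity, consistent with the way $\bm{g}$ and $p$ are grouped together in $Z$.
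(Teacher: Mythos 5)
Your proof is correct and follows essentially the same route as the paper's: both arguments realize the dual norm of $\bm{g}$ through an element $\bm{s}^*\in H^{1/2}(\gamma)$ (you via a near-maximizer, the paper via the Riesz representative) and then invoke Lemma \ref{lemma:bpExist} to build a fluid velocity with prescribed divergence and trace $-\bm{s}^*$. The only difference is that the paper additionally lifts $\bm{s}^*$ into the structure space $X$ and tests with $(\tilde{\bm{u}},\tilde{\bm{\eta}})$, whereas you take $\bm{\eta}=\bm{0}$ and let the fluid trace alone produce the term $\langle\bm{g},\bm{s}^*\rangle_\gamma\ge\tfrac12\|\bm{g}\|^2_{-1/2,\gamma}$ --- a legitimate minor streamlining that changes only the constant $\beta$.
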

\begin{proof}
Let $\tilde{\bm{z}} = (\tilde{q}; \tilde{\bm{s}}) \in Z$ be given. Since $\Lambda$ is a Hilbert space, by the Riesz Representation Theorem, we may find an $\bm{s^*} \in \Lambda^* = H^{1/2}(\gamma)$ with the following properties:
\begin{align}
||\bm{s^*}||_{1/2,\gamma} &= ||\tilde{\bm{s}}||_{-1/2,\gamma} \label{eq:sNorm}\\
\langle \bm{\tilde{s}},\bm{\theta}\rangle_\gamma &= (\bm{s^*}, \bm{\theta})_{1/2,\gamma} \hspace{3mm} \forall \bm{\theta} \in \Lambda^* = H^{1/2}(\gamma). \label{eq:sInnerProd}
\end{align}
Also, we can choose a $\bm{\tilde{\eta}} \in X$ such that 
\begin{equation} \label{eq:etaTrace}
\bm{\tilde{\eta}}\big|_\gamma = \bm{s}^* \quad\text{and} \quad 
||\bm{\tilde{\eta}}||_1 \leq C_1 ||\bm{s}^*||_{1/2,\gamma}. 
\end{equation}
By Lemma \ref{lemma:bpExist}, we may choose a $\tilde{\bm{u}} \in U$ such that
\begin{align}
b_p(\tilde{\bm{u}},\tilde{q}) &= -(\nabla \cdot \tilde{\bm{u}}, q) = (\tilde{q},q) \hspace{5mm} \forall q \in Q, \label{tildeueqq} \\
\tilde{\bm{u}}|_\gamma &= -\bm{s}^*, \label{tildeugam}\\
||\tilde{\bm{u}}||_1 &\leq C(||\tilde{q}||_0 + ||\bm{s}^*||_{1/2,\gamma}). \label{tildeubound}
\end{align}
Note that \eqref{eq:etaTrace} and \eqref{tildeubound} imply that 
\begin{equation}\label{eq:uetaBound}
||\bm{\tilde{u}}||_1 + ||\bm{\tilde{\eta}}||_1 \leq \overline{C} \Big(||\tilde{q}||_0 + ||\bm{s}^*||_{1/2,\gamma}\Big).
\end{equation}
Then, using \eqref{eq:sNorm} - \eqref{tildeugam} and \eqref{eq:uetaBound}, 
\begin{align*}
b(\tilde{\bm{u}},\tilde{\bm{\eta}}; \tilde{q},\bm{\tilde{s}}) &= b_\gamma(\tilde{\bm{u}},\tilde{\bm{\eta}},\bm{\tilde{s}}) + b_p(\tilde{\bm{u}}; \tilde{q}) = \langle\bm{\tilde{\eta}},\bm{\tilde{s}}\rangle_\gamma -  \langle\bm{\tilde{u}},\bm{\tilde{s}}\rangle_\gamma + (\tilde{q},\tilde{q})_{\Omega_f} \\ 
&= (\bm{\tilde{\eta}},\bm{s^*})_{1/2,\gamma} - (\bm{\tilde{u}},\bm{s^*})_{1/2,\gamma} + ||\tilde{q}||_0^2  \\
&= ||\bm{s}^*||^2_{1/2,\gamma} + ||\bm{s}^*||^2_{1/2,\gamma} + ||\tilde{q}||_0^2  \\
&\geq  ||\bm{s}^*||^2_{1/2,\gamma} + 1/2 \Big( ||\bm{s}^*||_{1/2,\gamma} + ||\tilde{q}||_0\Big) \Big( ||\bm{s}^*||_{1/2,\gamma} + ||\tilde{q}||_0\Big)  \\
&\geq ||\bm{s}^*||^2_{1/2,\gamma} + \frac{1/2}{\overline{C}} \Big( ||\tilde{\bm{u}}||_{1} + ||\tilde{\bm{\eta}}||_1\Big) \Big( ||\bm{s}^*||_{1/2,\gamma} + ||\tilde{q}||_0\Big) \\
&= ||\bm{s}^*||^2_{1/2,\gamma} + \frac{1/2}{\overline{C}} \Big( ||\tilde{\bm{u}}||_{U} + ||\tilde{\bm{\eta}}||_X\Big) \Big( ||\bm{\tilde{s}}||_{-1/2,\gamma} + ||\tilde{q}||_0\Big)  \\
&\geq  \frac{1/2}{\overline{C}} \Big( ||\tilde{\bm{u}}||_{U} + ||\tilde{\bm{\eta}}||_X\Big) \Big( ||\bm{\tilde{s}}||_{-1/2,\gamma} + ||\tilde{q}||_0\Big) \\
&\geq \frac{1/2}{\overline{C}} \Big( ||\tilde{\bm{u}}||^2_{U} + ||\tilde{\bm{\eta}}||^2_X\Big)^{1/2} \Big( ||\bm{\tilde{s}}||^2_{-1/2,\gamma} + ||\tilde{q}||^2_0\Big)^{1/2} \\
&= \frac{1/2}{\overline{C}} ||\tilde{\bm{u}}; \tilde{\bm{\eta}}||_Y ||\tilde{q}; \tilde{\bm{s}}||_Z.
\end{align*}
\noindent Thus, we see that 
\begin{align*}
\frac{b(\tilde{\bm{u}},\tilde{\bm{\eta}}; \tilde{q},\tilde{\bm{s}})}{||\tilde{\bm{u}}; \tilde{\bm{\eta}}||_Y}& \geq \beta ||\tilde{q}; \tilde{\bm{s}}||_Z.
\end{align*}
As $\tilde{z} = (\tilde{q},\tilde{\bm{s}})$ was arbitrary, taking the supremum over $0 \neq (\bm{u},\bm{\eta}) \in Y$, the inf-sup condition \eqref{infsup} holds.
\end{proof}

Using Theorem \ref{thm:ContInfSup}, 
\eqref{a-coercivity} and the existence theory for a saddle point problem, we establish the well-posedness of the semi-discrete system \eqref{eq:SaddlePt}.
\begin{theorem} \label{semi-discrete-well-posedness}
The semi-discrete system \eqref{eq:SaddlePt} has a unique solution $\left( (\bm{u}^{n+1}, \bm{\eta}^{n+1}), (p^{n+1},\bm{g}^{n+1})\right) \in Y \times Z$. 
\end{theorem}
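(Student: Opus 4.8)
The plan is to read \eqref{eq:SaddlePt} as an abstract saddle point problem in the unknowns $(\bm{u}^{n+1},\tfrac{1}{\Delta t}\bm{\eta}^{n+1})\in Y$ and $(\Delta t\,p^{n+1},\Delta t\,\bm{g}^{n+1})\in Z$, and to apply the standard existence and uniqueness theory for such problems (the Brezzi conditions, \cite{Braess_1997}, p.~127, Thm.~4.3). Two of the required hypotheses are already in hand: coercivity of $a(\cdot,\cdot)$ on all of $Y$ from \eqref{a-coercivity} (which a fortiori gives coercivity on the kernel of $b$), and the inf-sup condition for $b(\cdot,\cdot)$ from Theorem~\ref{thm:ContInfSup}. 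It therefore remains to verify the continuity of $a$ and $b$ and the boundedness of the functionals $\mathcal{F}_1,\mathcal{F}_2$; once these are established, the theorem yields a unique pair of grouped unknowns, and since the rescalings $\bm{\eta}^{n+1}\mapsto\tfrac{1}{\Delta t}\bm{\eta}^{n+1}$ and $(p^{n+1},\bm{g}^{n+1})\mapsto(\Delta t\,p^{n+1},\Delta t\,\bm{g}^{n+1})$ are invertible, this is equivalent to a unique solution in the original variables.

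Continuity of $a(\cdot,\cdot)$ on $Y\times Y$ follows termwise: each summand is an $L^2$ inner product of functions or of their symmetric gradients, so Cauchy-Schwarz together with $||D(\bm{v})||_0\le||\bm{v}||_1$ gives $|a(\bm{y_1},\bm{y_2})|\le C||\bm{y_1}||_Y||\bm{y_2}||_Y$, with $C$ depending on $\rho_f,\rho_s,\nu_f,\nu_s,\lambda$ and $\Delta t$. The pressure part $b_p(\bm{v},q)=-(\nabla\cdot\bm{v},q)_{\Omega_f}$ is likewise bounded by $||\bm{v}||_1||q||_0$.

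The one estimate needing genuine care---the main, though mild, obstacle---is the continuity of the interface form $b_\gamma$, since it is the only part not controlled by plain Cauchy-Schwarz and instead relies on the trace theorem and the definition of the $||\cdot||_{-1/2,\gamma}$ norm. For $\bm{v}\in U$ and $\bm{\varphi}\in X$ the traces on $\gamma$ lie in $H^{1/2}(\gamma)$ with $||\bm{v}||_{1/2,\gamma}\le C||\bm{v}||_1$ and $||\bm{\varphi}||_{1/2,\gamma}\le C||\bm{\varphi}||_1$; bounding the duality pairing between $H^{-1/2}(\gamma)$ and $H^{1/2}(\gamma)$ then gives $|b_\gamma((\bm{v},\bm{\varphi}),\bm{s})|\le||\bm{s}||_{-1/2,\gamma}\big(||\bm{\varphi}||_{1/2,\gamma}+||\bm{v}||_{1/2,\gamma}\big)\le C||\bm{s}||_{-1/2,\gamma}||(\bm{v},\bm{\varphi})||_Y$. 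Combining with the bound on $b_p$ shows $b$ is continuous on $Y\times Z$.

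Finally I would confirm that $\mathcal{F}_1$ and $\mathcal{F}_2$ are bounded linear functionals. The body-force and Neumann terms in $\mathcal{F}_1$ are controlled by the assumed regularity of $\bm{f_f},\bm{f_s},\bm{u_N},\bm{\eta_N}$ together with the trace theorem on $\Gamma_N^f,\Gamma_N^s$, while the previous-step terms $\rho_f(\bm{u}^n,\bm{v})_{\Omega_f}$ and $\tfrac{\rho_s}{\Delta t}(2\bm{\eta}^n-\bm{\eta}^{n-1},\bm{\varphi})_{\Omega_s}$ are bounded because $\bm{u}^n,\bm{\eta}^n,\bm{\eta}^{n-1}$ are fixed data carried from the initial conditions and the preceding time step; similarly $\mathcal{F}_2(q,\bm{s})=\tfrac{1}{\Delta t}\langle\bm{\eta}^n,\bm{s}\rangle_\gamma$ is bounded using $\bm{\eta}^n|_\gamma\in H^{1/2}(\gamma)$. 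With continuity of $a$ and $b$, coercivity \eqref{a-coercivity}, the inf-sup condition of Theorem~\ref{thm:ContInfSup}, and bounded right-hand sides, the abstract saddle point theorem furnishes the unique solution $\big((\bm{u}^{n+1},\bm{\eta}^{n+1}),(p^{n+1},\bm{g}^{n+1})\big)\in Y\times Z$; an induction on $n$ from the given initial data confirms the data hypotheses hold at every time level.
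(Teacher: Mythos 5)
Your proposal is correct and follows essentially the same route as the paper, which likewise deduces the result from the coercivity estimate \eqref{a-coercivity}, the inf-sup condition of Theorem~\ref{thm:ContInfSup}, and the abstract saddle point existence theory of \cite{Braess_1997}. The only difference is that you spell out the routine continuity bounds for $a$, $b$, and the right-hand-side functionals (and the invertibility of the $\Delta t$ rescalings), which the paper leaves implicit.
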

Having shown the semi-discrete system is well-posed, we next consider the well-posedness of the fully discrete saddle point system.

\section{Fully Discrete Model.}\label{sec:FullyDiscreteModel}

In this section, we consider the finite element discretization of the temporally discretized form \eqref{WeakFormTimeDisc}. Let $h_1, h_2, \text{and } h_\gamma$ represent the mesh sizes of $\Omega_f, \Omega_s, \text{and } \gamma$.
Denote by $U^{h_1} \subset U, Q^{h_1} \subset Q, X^{h_2} \subset X, \Lambda^{h_\gamma} \subset H^{1/2}(\gamma)$ the discrete finite element spaces for $\bm{u},\bm{p},\bm{\eta},\bm{g}$. For the proof of the discrete inf-sup condition, higher regularity for $\bm{g}$ is required. We assume $\Lambda^{h_\gamma}$ is a subspace of $H^{1/2}(\gamma)$ instead of $H^{-1/2}(\gamma)$ for the analysis of the discrete model.
Set $Y^h := U^{h_1} \times X^{h_2}$, $Z^h := Q^{h_1} \times \Lambda^{h_\gamma}$.
Posing \eqref{WeakFormTimeDisc} over the finite element spaces, and using the bilinear forms $a(\cdot,\cdot)$ and $ b(\cdot,\cdot)$ defined as in \eqref{eq:bilinearForms}, the fully discrete saddle point problem reads: \\
\textit{Find $\left((\bm{u}_h^{n+1}, \bm{\eta}_h^{n+1}), (p_h^{n+1},\bm{g}_h^{n+1})\right) \in Y^h \times Z^h$ such that}
\begin{align}\label{eq:SaddlePtDisc}
\begin{split}
a((\bm{u}_h^{n+1},\frac{1}{\Delta t}\bm{\eta}_h^{n+1}),(\bm{v}_h,\bm{\varphi}_h)) + b((\bm{v}_h,\bm{\varphi}_h),(\Delta t p_h^{n+1},\Delta t \bm{g}_h^{n+1})) &= \mathcal{F}_{h,1}(\bm{v}_h,\bm{\varphi}_h) \hspace{5mm} \forall \hspace{1mm} (\bm{v}_h, \bm{\varphi}_h) \in Y^h, \\
b((\bm{u}_h^{n+1},\frac{1}{\Delta t}\bm{\eta}_h^{n+1}),(q_h,\bm{s}_h)) &= \mathcal{F}_{h,2}(q_h,\bm{s}_h) \hspace{5mm} \forall \hspace{1mm} (q_h, \bm{s}_h) \in Z^h. 
\end{split}
\end{align}

As in the semi-discrete case, we may show the well-posedness of the saddle point system by proving the coercivity of $a(\cdot,\cdot)$ and the inf-sup condition between $Y^h$ and $Z^h$ for $b(\cdot,\cdot)$.
Since $a(\cdot;\cdot)$ was proven to be coercive in $Y$ in the previous section, and $Y^h \subset Y$, $a(\cdot;\cdot)$ is also coercive on $Y^h$. Thus, all that remains to show is the discrete inf-sup condition.

For examining the discrete model, we assume that $U^{h_1}, Q^{h_1}$ satisfy the discrete inf-sup condition for the traditional Stokes problem:
\begin{equation*}
\underset{0\neq q^h \in Q^{h_1}}{\inf} \underset{0\neq \bm{v}^h \in U^{h_1}}{\sup} \frac{(\nabla \cdot \bm{v}^h, q^h)}{||\bm{v}^h||_1||q^h||_0} \geq \beta^* > 0.
\end{equation*}
The following subspace of $U^{h_1}$ is needed for analysis: $$U^{h_1}_{\gamma 0} = \{\bm{u}^h \in U^{h_1} \hspace{1mm} \big| \hspace{1mm} \bm{u}^h\big|_\gamma = 0\}.$$
Since the discrete LM space $\Lambda^{h_\gamma}$ is a subspace of $H^{1/2}(\gamma)$, the following inverse inequality holds \cite{Temam_1984}:
\begin{align}\label{InvIneq}
||\bm{k}^h||_{s,\gamma} \leq C_2 h_{\gamma}^{t-s} ||\bm{k}^h||_{t,\gamma}, \hspace{5mm} \forall \medskip \bm{k}^h \in \Lambda^{h_\gamma}, \hspace{5mm} -\frac{1}{2} \leq t \leq s \leq \frac{1}{2}.
\end{align}
We also assume standard approximation properties for the finite element spaces; see, for example, \cite{Gunzburger_1992}.

\begin{theorem}\label{thm:DiscInfSup}
For $\frac{h_\gamma}{h_1}, \frac{h_\gamma}{h_2}$ sufficiently large, there exists a positive constant $\beta$ such that:
\begin{align}\label{infsup-disc}
\underset{0 \neq \bm{y}^h \in Y^h}{\sup} \frac{b(\bm{y}^h,\bm{z}^h)}{||\bm{y}^h||_{Y}} \geq \beta ||\bm{z}^h||_{Z} > 0 \hspace{3mm} \forall \bm{z}^h \in Z^h.
\end{align}
\end{theorem}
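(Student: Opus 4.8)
The plan is to replicate the structure of the continuous argument in Theorem \ref{thm:ContInfSup}, replacing each continuous building block by a discrete one and then absorbing the resulting consistency errors using the inverse inequality \eqref{InvIneq}. Fix $\tilde{\bm{z}}^h = (\tilde{q}^h, \tilde{\bm{s}}^h) \in Z^h$. Since $\Lambda^{h_\gamma} \subset H^{1/2}(\gamma)$ by hypothesis, the Riesz representation theorem again furnishes an $\bm{s}^* \in H^{1/2}(\gamma)$ with $\|\bm{s}^*\|_{1/2,\gamma} = \|\tilde{\bm{s}}^h\|_{-1/2,\gamma}$ and $\langle \tilde{\bm{s}}^h, \bm{\theta}\rangle_\gamma = (\bm{s}^*, \bm{\theta})_{1/2,\gamma}$ for all $\bm{\theta} \in H^{1/2}(\gamma)$; in particular $\langle \tilde{\bm{s}}^h, \bm{s}^*\rangle_\gamma = \|\tilde{\bm{s}}^h\|_{-1/2,\gamma}^2$. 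The goal is to build $\bm{y}^h = (\tilde{\bm{u}}^h, \tilde{\bm{\eta}}^h) \in Y^h$ whose interface traces approximate $-\bm{s}^*$ and $\bm{s}^*$, respectively, and whose fluid component also resolves $\tilde{q}^h$ through $b_p$.

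I would then proceed in three steps. First, choose $\tilde{\bm{\eta}}^h \in X^{h_2}$ as a Scott--Zhang-type quasi-interpolant of an $H^1(\Omega_s)$ extension of $\bm{s}^*$, so that $\tilde{\bm{\eta}}^h|_\gamma$ approximates $\bm{s}^*$ while $\|\tilde{\bm{\eta}}^h\|_{1,\Omega_s} \leq C \|\bm{s}^*\|_{1/2,\gamma}$ by stability of the extension and of the interpolant. Second, I would establish a discrete analogue of Lemma \ref{lemma:bpExist}: using the assumed discrete Stokes inf-sup between $U^{h_1}$ and $Q^{h_1}$ together with the subspace $U^{h_1}_{\gamma 0}$, exactly as the continuous proof used $U_{\gamma 0}$, I would produce $\tilde{\bm{u}}^h \in U^{h_1}$ with $b_p(\tilde{\bm{u}}^h, q^h) = (\tilde{q}^h, q^h)$ for all $q^h \in Q^{h_1}$, with trace $\tilde{\bm{u}}^h|_\gamma$ approximating $-\bm{s}^*$, and with $\|\tilde{\bm{u}}^h\|_{1,\Omega_f} \leq C(\|\tilde{q}^h\|_0 + \|\bm{s}^*\|_{1/2,\gamma})$. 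Combining the two bounds yields $\|\bm{y}^h\|_Y \leq \overline{C}(\|\tilde{q}^h\|_0 + \|\tilde{\bm{s}}^h\|_{-1/2,\gamma}) = \overline{C}\|\tilde{\bm{z}}^h\|_Z$, which is precisely the control on the test function needed at the end.

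Third, I would evaluate $b(\bm{y}^h, \tilde{\bm{z}}^h) = \langle \tilde{\bm{\eta}}^h, \tilde{\bm{s}}^h\rangle_\gamma - \langle \tilde{\bm{u}}^h, \tilde{\bm{s}}^h\rangle_\gamma + (\tilde{q}^h, \tilde{q}^h)_{\Omega_f}$. Writing $\tilde{\bm{\eta}}^h|_\gamma = \bm{s}^* + \bm{e}_\eta$ and $\tilde{\bm{u}}^h|_\gamma = -\bm{s}^* + \bm{e}_u$, the leading terms reproduce the continuous computation $2\|\bm{s}^*\|_{1/2,\gamma}^2 + \|\tilde{q}^h\|_0^2 = 2\|\tilde{\bm{s}}^h\|_{-1/2,\gamma}^2 + \|\tilde{q}^h\|_0^2$, while the trace-consistency errors contribute $\langle \bm{e}_\eta, \tilde{\bm{s}}^h\rangle_\gamma$ and $\langle \bm{e}_u, \tilde{\bm{s}}^h\rangle_\gamma$. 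I would bound each of these by Cauchy--Schwarz in $L^2(\gamma)$ followed by \eqref{InvIneq} with $t = -\tfrac12,\, s = 0$, giving $\|\tilde{\bm{s}}^h\|_{0,\gamma} \leq C_2 h_\gamma^{-1/2}\|\tilde{\bm{s}}^h\|_{-1/2,\gamma}$, together with the standard $O(h_2^{1/2})$ and $O(h_1^{1/2})$ trace-interpolation estimates for $\|\bm{e}_\eta\|_{0,\gamma}$ and $\|\bm{e}_u\|_{0,\gamma}$. The errors then scale like $(h_2/h_\gamma)^{1/2}$ and $(h_1/h_\gamma)^{1/2}$ times $\|\tilde{\bm{s}}^h\|_{-1/2,\gamma}^2$, so once $h_\gamma/h_1$ and $h_\gamma/h_2$ are taken sufficiently large they become a fixed fraction of the leading positive term and are absorbed, leaving $b(\bm{y}^h, \tilde{\bm{z}}^h) \geq c\,\|\tilde{\bm{z}}^h\|_Z^2$. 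Dividing by $\|\bm{y}^h\|_Y \leq \overline{C}\|\tilde{\bm{z}}^h\|_Z$ produces \eqref{infsup-disc}.

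The main obstacle is the third step's error control, and more fundamentally the discrete version of Lemma \ref{lemma:bpExist}: in the discrete setting the interface trace cannot be prescribed exactly to the continuous function $\bm{s}^*$, so one must simultaneously satisfy the discrete divergence constraint tested against $Q^{h_1}$ and match $-\bm{s}^*$ only approximately, then prove the mismatch is controllable. This is exactly where the mesh-ratio hypothesis enters: $h_\gamma \gg h_1, h_2$ guarantees that the fluid and structure trace spaces on $\gamma$ are fine relative to $\Lambda^{h_\gamma}$, so the multiplier is well resolved, while \eqref{InvIneq} quantifies the price of passing between the $H^{-1/2}$ and $L^2$ norms of the discrete multiplier. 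Tracking the precise powers of $h_\gamma/h_1$ and $h_\gamma/h_2$ and verifying that all constants are independent of the mesh parameters, so that the final $\beta$ is genuinely mesh-independent, is the delicate bookkeeping at the heart of the argument.
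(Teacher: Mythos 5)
Your argument is correct in outline, but it follows a genuinely different route from the paper's. You transplant the continuous proof of Theorem \ref{thm:ContInfSup} directly: take the Riesz representative $\bm{s}^*$ of $\tilde{\bm{s}}^h$, build the discrete test pair by (quasi-)interpolating extensions of $\pm\bm{s}^*$, accept that the traces only approximate $\pm\bm{s}^*$, and absorb the resulting consistency terms $\langle \bm{e}_u,\tilde{\bm{s}}^h\rangle_\gamma$, $\langle \bm{e}_\eta,\tilde{\bm{s}}^h\rangle_\gamma$ using the $O(h_i^{1/2})$ trace-interpolation estimate together with \eqref{InvIneq} ($t=-\tfrac12$, $s=0$), which is where the hypothesis that $h_\gamma/h_1$ and $h_\gamma/h_2$ be large enters for you. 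The paper avoids trace consistency errors altogether by a different device: it defines the discrete test functions $\hat{\bm{u}}_1^h$ and $\hat{\bm{\eta}}^h$ as solutions of auxiliary \emph{discrete} Stokes and elliptic problems driven by the interface datum $\bm{s}^h$, so that testing each problem with its own solution gives the exact identities $\|\hat{\bm{u}}_1^h\|_1^2=-(\bm{s}^h,\hat{\bm{u}}_1^h)_\gamma$ and $\|\hat{\bm{\eta}}^h\|_1^2\le(\bm{s}^h,\hat{\bm{\eta}}^h)_\gamma$; the mesh-ratio condition is then used elsewhere, namely to prove the lower bounds $\|\hat{\bm{u}}_1^h\|_1,\|\hat{\bm{\eta}}^h\|_1\ge C\|\bm{s}^h\|_{-1/2,\gamma}$ by comparing the discrete auxiliary solutions with their continuous counterparts through finite element error estimates, which requires $H^2$ elliptic regularity for the auxiliary problems plus \eqref{InvIneq}. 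The trade-off is real: your route dispenses with the elliptic-regularity assumption but must carefully establish a discrete analogue of Lemma \ref{lemma:bpExist} (set the approximate trace first, then correct the divergence with a function from $U^{h_1}_{\gamma 0}$ via the discrete Stokes inf-sup, so the trace is undisturbed) and the multiplicative trace-interpolation bound $\|v-I_h v\|_{0,\gamma}\le Ch^{1/2}\|v\|_{1}$; the paper's route keeps the interface pairing exact at the cost of regularity hypotheses and a chain of auxiliary boundary-value problems. Both yield a mesh-independent $\beta$ under the same mesh-ratio assumption, and both ultimately rely on the discrete Stokes inf-sup on $U^{h_1}_{\gamma 0}\times Q^{h_1}$ to handle the pressure component.
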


\begin{proof}
    Let $\bm{z}^h := (q^h, \bm{s}^h) \in Z^h$ be given.
    
Let $(\hat{\bm{u}_1}, \hat{p}_1) \in U \times Q$ be the solution of the following continuous Stokes problem:
\begin{align}\label{SP1}
\begin{split}
(\hat{\bm{u}_1}, \bm{v})_{\Omega_f} + (D(\hat{\bm{u}_1}), D(\bm{v}))_{\Omega_f} + (\nabla \cdot \bm{v},\hat{p}_1) &= -(\bm{s}^h, \bm{v})_\gamma \hspace{5mm} \forall \hspace{1mm} \bm{v} \in U, \\
(\nabla \cdot \hat{\bm{u}_1},r) &= 0 \hspace{5mm} \forall \hspace{1mm} r \in Q.
\end{split}
\end{align}
The existence and uniqueness of $(\hat{\bm{u}_1}, \hat{p}_1)$ come from the inf-sup condition of $(\nabla \cdot \bm{v},q)$ on $U \times Q$, as well as the coercivity of the terms  $(\hat{\bm{u}_1}, \bm{v})_{\Omega_f} + (D(\hat{\bm{u}_1}), D(\bm{v}))_{\Omega_f}$. 
Then, we can bound the norms of the solution by the norm of the data: 
\begin{align*}
||\hat{\bm{u}_1}||_1 + ||\hat{p}_1||_0 \leq C ||\bm{s}^h||_{-1/2,\gamma}.
\end{align*}

\noindent Thus, by regularity theories for the Stokes equations (see \cite{Temam_1984}) and by \eqref{InvIneq}, we get
\begin{align}
||\hat{\bm{u}}_1||_2 + ||\hat{p}||_1 \leq C_1 ||\bm{s}^h||_{1/2,\gamma} \leq \frac{C_1 C_2}{h_\gamma} ||\bm{s}^h||_{-1/2,\gamma}.
\end{align} 

\noindent Likewise, let $(\hat{\bm{u}}^h_1, \hat{p}^h_1) \in U^{h_1} \times Q^{h_1}$ be the solution of the corresponding discrete Stokes problem:

\begin{align}\label{SP2}
\begin{split}
(\hat{\bm{u}}_1^h, \bm{v}^h)_{\Omega_f} + (D(\hat{\bm{u}}_1^h), D(\bm{v}^h))_{\Omega_f} + (\nabla \cdot \bm{v}^h,\hat{p}_1^h) &= -(\bm{s}^h, \bm{v}^h)_\gamma \hspace{5mm} \forall \hspace{1mm} \bm{v}^h \in U^{h_1}, \\
(\nabla \cdot \hat{\bm{u}}_1^h,r^h) &= 0 \hspace{5mm} \forall \hspace{1mm} r^h \in Q^{h_1}.
\end{split}
\end{align}
Again, the existence and uniqueness of $(\hat{\bm{u}}_1^h, \hat{p}_1^h)$ come from the inf-sup condition of $(\nabla \cdot \bm{v}^h,q)$ on $U^{h_1} \times Q^{h_1}$.
Using approximation properties of the discrete spaces, we can derive the following error estimate:
\begin{align}\label{errorEst}
\begin{split}
||\hat{\bm{u}}_1 - \hat{\bm{u}}_1^h||_1 \leq C(\underset{\bm{v}^h \in U^{h_1}}{\inf} ||\hat{\bm{u}}_1 - \bm{v}^h||_1 + \underset{q^h \in Q^{h_1}}{\inf} ||\hat{p}_1 - q^h||_0) \leq
C_3 h_1 ( ||\hat{\bm{u}_1}||_2 + ||\hat{p}_1||_1) \leq C_1 C_2 C_3 \frac{h_1}{h_\gamma} ||\bm{s}^h||_{-1/2,\gamma}. 
\end{split}
\end{align}

Define the space $\tilde{K} := \{\bm{v} \in U: b_p(\bm{v},r) = 0, \hspace{2mm} \forall r \in L^2(\Omega_f)\}$. Applying Lemma \ref{lemma:bpExist} with $\tilde{q}=0$, we may find a $\bm{v} \in \tilde{K}$ such that for a given $\bm{k} \in H^{1/2}(\gamma)$,
\begin{equation}\label{eq:vTok}
 \bm{v}\big|_\gamma = \bm{k}, \quad
||\bm{v}||_1 \leq C_4 ||\bm{k}||_{1/2,\gamma}. 
\end{equation}
 With \eqref{SP1} and \eqref{eq:vTok}, we can derive the following bounds:
\begin{align}\label{UboundSNorm}
\begin{split}
||\bm{s}^h||_{-1/2,\gamma} &= \underset{\bm{k} \in H^{1/2}(\gamma)}{\sup} \frac{(\bm{s}^h, \bm{k})_\gamma}{||\bm{k}||_{1/2,\gamma}} 
 \leq \underset{\bm{k} \in H^{1/2}(\gamma)}{\sup} C_4 \frac{(\bm{s}^h, \bm{k})_\gamma}{||\bm{v}||_{1}}  
 \leq \underset{\bm{v} \in \tilde{K}}{\sup} \hspace{2mm} C_4 \frac{(\bm{s}^h, \bm{v})_\gamma}{||\bm{v}||_{1}}   \\
&= C_4 \hspace{2mm} \underset{\bm{v} \in \tilde{K}}{\sup} \frac{-(\hat{\bm{u}_1}, \bm{v})_{\Omega_f} - (D(\hat{\bm{u}_1}), D(\bm{v}))_{\Omega_f} - (\nabla \cdot \bm{v},\hat{p}_1)}{||\bm{v}||_1} \\
&\leq \overline{C_4} ||\hat{\bm{u}}_1||_1.
\end{split}
\end{align}
Using the bounds in \eqref{errorEst} and \eqref{UboundSNorm}, we see that for $\frac{h_\gamma}{h_1} > K_1 := C_1 C_2 C_3 \overline{C_4}$,
\begin{align}\label{ubiggers}
\begin{split}
||\hat{\bm{u}}_1^h||_1 &\geq ||\hat{\bm{u}}_1||_1 - ||\hat{\bm{u}}_1 - \hat{\bm{u}}_1^h||_1  \\
& \geq \frac{1}{\overline{C_4}} ||\bm{s}^h||_{-1/2,\gamma} - C_1 C_2 C_3 \frac{h_1}{h_\gamma} ||\bm{s}^h||_{-1/2,\gamma} \hspace{10mm} \\ 
& \geq C ||\bm{s}^h||_{-1/2,\gamma}, 
\end{split}
\end{align}
From \eqref{SP2}, we derive an expression for $||\hat{\bm{u}}_1^h||_1$, letting $\bm{v}^h = \hat{\bm{u}}_1^h$ and $r^h=\hat{p}_1^h$: 
\begin{align}\label{Ubound}
\begin{split}
(\hat{\bm{u}}_1^h, \hat{\bm{u}}_1^h)_{\Omega_f} + (D(\hat{\bm{u}}_1^h), D(\hat{\bm{u}}_1^h))_{\Omega_f}  &= -(\bm{s}^h, \hat{\bm{u}}_1^h)_\gamma,  \\
||\hat{\bm{u}}_1^h||_1^2 &= -(\bm{s}^h, \hat{\bm{u}}_1^h)_\gamma.
\end{split}
\end{align}

\noindent Now, we define $\hat{\bm{\eta}} \in X$ as the solution of the following problem:
\begin{align}\label{SP3}
(\hat{\bm{\eta}}, \bm{\varphi})_{\Omega_s} + (D(\hat{\bm{\eta}}), D(\bm{\varphi}))_{\Omega_s} + (\nabla \cdot \hat{\bm{\eta}}, \nabla \cdot \bm{\varphi})_{\Omega_s} &= (\bm{s}^h, \bm{\varphi})_\gamma \hspace{5mm} \forall \hspace{1mm} \bm{\varphi} \in X. 
\end{align}
Likewise, let $\hat{\bm{\eta}}^h \in X^{h_2}$ be the solution to the discretized version of \eqref{SP3}.
\begin{align}\label{SP4}
(\hat{\bm{\eta}}^h, \bm{\varphi}^h)_{\Omega_s} + (D(\hat{\bm{\eta}}^h), D(\bm{\varphi}^h))_{\Omega_s} + (\nabla \cdot \hat{\bm{\eta}}^h, \nabla \cdot \bm{\varphi}^h)_{\Omega_s} &= (\bm{s}^h, \bm{\varphi}^h)_\gamma \hspace{5mm} \forall \hspace{1mm} \bm{\varphi}^h \in X^{h_2}. 
\end{align}
Again by regularity theories and by \eqref{InvIneq}, we can write
\begin{align}
||\hat{\bm{\eta}}||_2 \leq C_6||\bm{s}^h||_{1/2,\gamma} \leq \frac{C_6 C_2}{h_\gamma} ||\bm{s}^h||_{-1/2,\gamma}.
\end{align}
We use this bound, along with corresponding approximation properties, to derive the error estimate
\begin{align}\label{EtaError}
||\hat{\bm{\eta}} - \hat{\bm{\eta}}^h||_1 \leq C \underset{\bm{\varphi}^h \in X^{h_2}}{\inf} ||\hat{\bm{\eta}} - \bm{\varphi}^h||_1 \leq C_7 h_2 ||\hat{\bm{\eta}}||_2 \leq C_7 C_6 C_2\frac{h_2}{h_\gamma} ||\bm{s}^h||_{-1/2,\gamma}.
\end{align}

\noindent For any $\bm{k} \in H^{1/2}(\gamma)$, we can choose a $\bm{\varphi} \in X$ such that 
\begin{equation}\label{phitok}
\bm{\varphi}\Big|_\gamma = \bm{k}, \quad \text {and} \quad
||\bm{\varphi}||_1 \leq C_5||\bm{k}||_{1/2,\gamma}.
\end{equation}
As before, we can write an expression for the norm of $\bm{s}^h$ using \eqref{SP3} and \eqref{phitok}:
\begin{align}\label{EtaSupBound}
\begin{split}
||\bm{s}^h||_{-1/2,\gamma} &= \underset{\bm{k} \in H^{1/2}(\gamma)}{\sup} \frac{(\bm{s}^h, \bm{k})_\gamma}{||\bm{k}||_{1/2,\gamma}}  \leq \underset{\bm{k} \in H^{1/2}(\gamma)}{\sup} C_5 \frac{(\bm{s}^h, \bm{k})_\gamma}{||\bm{\varphi}||_{1}}  
\leq \underset{\bm{\varphi} \in X}{\sup} \hspace{2mm} C_5 \frac{(\bm{s}^h, \bm{\varphi})_\gamma}{||\bm{\varphi}||_{1}}   \\
&= C_5 \hspace{2mm} \underset{\bm{\varphi} \in X}{\sup} \frac{(\hat{\bm{\eta}}, \bm{\varphi})_{\Omega_s} + (D(\hat{\bm{\eta}}), D(\bm{\varphi}))_{\Omega_s} + (\nabla \cdot \hat{\bm{\eta}},\nabla \cdot \bm{\varphi})_{\Omega_s}}{||\bm{\varphi}||_1} \\
&\leq \overline{C_5} ||\hat{\bm{\eta}}||_1.
\end{split}
\end{align}
Using the bounds in \eqref{EtaError} and \eqref{EtaSupBound}, we see that for $\frac{h_\gamma}{h_2} > K_2 := \overline{C_5} C_6 C_7 C_2$,
\begin{align}\label{etaToUse}
\begin{split}
||\hat{\bm{\eta}}^h||_1 &\geq ||\hat{\bm{\eta}}||_1 - ||\hat{\bm{\eta}} - \hat{\bm{\eta}}^h||_1 \\
&\geq \frac{1}{\overline{C_5}} ||\bm{s}^h||_{-1/2,\gamma}  - C_7 C_6 C_2 \frac{h_2}{h_\gamma} ||\bm{s}^h||_{-1/2,\gamma} \\
&\geq C ||\bm{s}^h||_{-1/2,\gamma}, 
\end{split}
\end{align}
Now, letting $\bm{\varphi}^h = \hat{\bm{\eta}}^h$ in \eqref{SP4},
\begin{align}\label{EtaBound}
\begin{split}
(\hat{\bm{\eta}}^h, \hat{\bm{\eta}}^h)_{\Omega_s} + (D(\hat{\bm{\eta}}^h), D(\hat{\bm{\eta}}^h))_{\Omega_s} + (\nabla \cdot \hat{\bm{\eta}}^h, \nabla \cdot \hat{\bm{\eta}}^h)_{\Omega_s} &= (\bm{s}^h, \hat{\bm{\eta}}^h)_\gamma \\
||\hat{\bm{\eta}}^h||_1^2 + || \nabla \cdot \hat{\bm{\eta}}^h||_0^2 &= (\bm{s}^h, \hat{\bm{\eta}}^h)_\gamma \\
||\hat{\bm{\eta}}^h||_1^2  &\leq (\bm{s}^h, \hat{\bm{\eta}}^h)_\gamma. \\
\end{split}
\end{align}
Lastly, since we know the inf-sup condition holds on $U^{h_1}_{\gamma 0} \times Q^{h_1}$, we choose $\hat{\bm{u}}_2^h \in U^{h_1}_{\gamma 0}$ such that
\begin{align}
(\nabla \cdot \hat{\bm{u}}_2^h,r^h) &= -(q^h, r^h) \hspace{5mm} \forall r^h \in Q^{h_1}, \label{U2Def}\\
||\hat{\bm{u}}_2^h||_1 &\leq C ||q^h||_0. \label{U2Norm}
\end{align}

Now, set $\hat{\bm{u}}^h := \hat{\bm{u}}_1^h + \hat{\bm{u}}_2^h$. Then for the given $(q^h, \bm{s}^h) \in Z^h$,
\begin{align*}
b(\hat{\bm{u}}^h, \hat{\bm{\eta}}^h; q^h, \bm{s}^h) &= (\hat{\bm{\eta}}^h, \bm{s}^h)_\gamma -  (\hat{\bm{u}}^h, \bm{s}^h)_\gamma -  (\nabla \cdot \hat{\bm{u}}^h; q^h) \\
&= (\hat{\bm{\eta}}^h, \bm{s}^h)_\gamma -  (\hat{\bm{u}}_1^h, \bm{s}^h)_\gamma -  (\hat{\bm{u}}_2^h, \bm{s}^h)_\gamma - (\nabla \cdot \hat{\bm{u}}^h_1; q^h) -  (\nabla \cdot \hat{\bm{u}}^h_2; q^h) \\
&= (\hat{\bm{\eta}}^h, \bm{s}^h)_\gamma +  ||\hat{\bm{u}}_1^h||_1^2 + ||q^h||_0^2 \hspace{10mm} \text{ by \eqref{Ubound}, \eqref{SP2}, and \eqref{U2Def}} \\
& \geq ||\hat{\bm{\eta}}^h||_1^2 +  ||\hat{\bm{u}}_1^h||_1^2 + ||q^h||_0^2 \hspace{15mm} \text{ by \eqref{EtaBound}} \\
& \geq \frac{1}{2} ||\hat{\bm{\eta}}^h||_1^2 + \frac{1}{2} ||q^h||_0^2  + \frac{1}{2} ||\hat{\bm{u}}_1^h||_1^2 + \frac{1}{2} ||q^h||_0^2 \\
& \geq \frac{1}{4} \Big(||\hat{\bm{\eta}}^h||_1 + ||q^h||_0\Big)\Big(||\hat{\bm{\eta}}^h||_1 + ||q^h||_0\Big) + \frac{1}{4} \Big(||\hat{\bm{u}}_1^h||_1 + ||q^h||_0\Big)\Big(||\hat{\bm{u}}_1^h||_1 + ||q^h||_0\Big)  \\
& \geq \frac{1}{4}C \Big[ \Big(||\bm{s}^h||_{-1/2,\gamma} + ||q^h||_0\Big)\Big(||\hat{\bm{\eta}}^h||_1 + ||q^h||_0\Big) + \Big(||\bm{s}^h||_{-1/2,\gamma} + ||q^h||_0\Big)\Big(||\hat{\bm{u}}_1^h||_1 + ||\hat{\bm{u}}_2^h||_1\Big)\Big]\\ &\hspace{95mm} \text{ by \eqref{etaToUse}, \eqref{ubiggers}, \eqref{U2Norm}} \\
&\geq \frac{1}{4} C \Big(||\bm{s}^h||_{-1/2,\gamma} + ||q^h||_0\Big) \Big(||\hat{\bm{\eta}}^h||_1 + ||q^h||_0 + ||\hat{\bm{u}}^h||_1\Big) \\
& \geq \frac{1}{4} C \Big(||\bm{s}^h||_{-1/2,\gamma} + ||q^h||_0\Big) \Big(||\hat{\bm{\eta}}^h||_1 + ||\hat{\bm{u}}^h||_1\Big) \\
&\geq \frac{1}{4} C \Big(||\bm{s}^h||_{-1/2,\gamma}^2 + ||q^h||_0^2\Big)^{1/2} \Big(||\hat{\bm{\eta}}^h||_1^2 + ||\hat{\bm{u}}^h||_1^2\Big)^{1/2}  \\ 
&= \frac{1}{4} C ||q^h; \bm{s}^h||_Z \hspace{1mm} ||\hat{\bm{u}}^h; \hat{\bm{\eta}}^h||_Y.
\end{align*}
Finally, 
\begin{align*}
\frac{b(\hat{\bm{u}}^h, \hat{\bm{\eta}}^h; q^h, \bm{s}^h)}{||\hat{\bm{u}}^h; \hat{\bm{\eta}}^h||_Y} \geq \beta^* ||q^h; \bm{s}^h||_Z. 
\end{align*}
Since $(q^h, \bm{s}^h) \in Z^h$ was arbitrary, taking the supremum over $0 \neq (\bm{u}^h, \bm{\eta}^h) \in Y^h$, we have the discrete inf-sup condition \eqref{infsup-disc}.
\end{proof}

\noindent Using Theorem \ref{thm:DiscInfSup} and the coercivity of $a(\cdot, \cdot)$ on $Y^h$, we obtain the well-posedness of \eqref{eq:SaddlePtDisc}.
\begin{theorem} \label{fully-discrete-well-posedness}
The fully discrete system \eqref{eq:SaddlePtDisc} has a unique solution $\Big((\bm{u}_h^{n+1}, \bm{\eta}_h^{n+1}), (p_h^{n+1},\bm{g}_h^{n+1})\Big) \in Y^h \times Z^h$.
\end{theorem}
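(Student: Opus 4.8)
The plan is to obtain Theorem \ref{fully-discrete-well-posedness} as a direct consequence of the abstract theory for saddle point problems (Braess, page 127, Thm.~4.3), exactly as was done for the semi-discrete system in Theorem \ref{semi-discrete-well-posedness}. That theory guarantees a unique solution of a mixed problem posed over $Y^h \times Z^h$ once four ingredients are in place: continuity of $a(\cdot,\cdot)$ and $b(\cdot,\cdot)$ on $Y^h \times Y^h$ and $Y^h \times Z^h$ respectively, coercivity of $a(\cdot,\cdot)$ on $Y^h$ (or at least on the kernel of $b$), and the inf-sup condition for $b(\cdot,\cdot)$ between $Y^h$ and $Z^h$. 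First I would note that the factors $\frac{1}{\Delta t}$ and $\Delta t$ appearing in \eqref{eq:SaddlePtDisc} are fixed positive constants, so the change of variables $\tilde{\bm{\eta}}_h := \frac{1}{\Delta t}\bm{\eta}_h^{n+1}$, $\tilde{p}_h := \Delta t\, p_h^{n+1}$, $\tilde{\bm{g}}_h := \Delta t\, \bm{g}_h^{n+1}$ recasts \eqref{eq:SaddlePtDisc} in the standard mixed form $a(\bm{y}_h,\bm{w}_h) + b(\bm{w}_h,\bm{z}_h) = \mathcal{F}_{h,1}(\bm{w}_h)$, $b(\bm{y}_h,\bm{r}_h) = \mathcal{F}_{h,2}(\bm{r}_h)$, with $\bm{y}_h := (\bm{u}_h^{n+1},\tilde{\bm{\eta}}_h)$ and $\bm{z}_h := (\tilde{p}_h,\tilde{\bm{g}}_h)$; since $\Delta t > 0$, solving for the rescaled unknowns is equivalent to solving for the original ones.

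With the problem in this form, the required hypotheses are already available. Continuity of $a(\cdot,\cdot)$ and $b(\cdot,\cdot)$ follows routinely from the Cauchy--Schwarz inequality together with the trace theorem for the interface pairings $\langle\cdot,\cdot\rangle_\gamma$, so I would merely state it. Coercivity of $a(\cdot,\cdot)$ on $Y^h$ is inherited for free: the estimate \eqref{a-coercivity} holds for every element of $Y$, and since $Y^h \subset Y$ it holds a fortiori on $Y^h$. The inf-sup condition between $Y^h$ and $Z^h$ is precisely Theorem \ref{thm:DiscInfSup}, valid under the stated mesh-ratio assumptions. Invoking the abstract result then yields existence and uniqueness.

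Alternatively, and perhaps more transparently since $Y^h \times Z^h$ is finite dimensional, I would argue that the square linear system is injective and hence bijective. Setting $\mathcal{F}_{h,1}=\mathcal{F}_{h,2}=0$, testing the first equation with $\bm{w}_h=\bm{y}_h$ and using $b(\bm{y}_h,\bm{z}_h)=0$ (the homogeneous second equation at $\bm{r}_h=\bm{z}_h$) gives $a(\bm{y}_h,\bm{y}_h)=0$, whence $\bm{y}_h=\bm{0}$ by coercivity; the first equation then reduces to $b(\bm{w}_h,\bm{z}_h)=0$ for all $\bm{w}_h \in Y^h$, so $\bm{z}_h=\bm{0}$ by the inf-sup condition of Theorem \ref{thm:DiscInfSup}. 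Thus the only homogeneous solution is trivial, and a square injective system is invertible.

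The point worth stressing is that there is essentially no remaining obstacle here: the entire difficulty of the fully discrete well-posedness was concentrated in establishing the discrete inf-sup condition of Theorem \ref{thm:DiscInfSup}, with its delicate requirement that $h_\gamma/h_1$ and $h_\gamma/h_2$ be sufficiently large. Once that and the inherited coercivity are in hand, Theorem \ref{fully-discrete-well-posedness} is a one-step application of standard saddle point theory. The only caveat to record is that the conclusion carries the same mesh-ratio hypotheses as Theorem \ref{thm:DiscInfSup}.
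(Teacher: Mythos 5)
Your proposal is correct and follows essentially the same route as the paper: the paper likewise obtains the result in one step by combining the coercivity of $a(\cdot,\cdot)$ inherited from $Y^h \subset Y$ with the discrete inf-sup condition of Theorem \ref{thm:DiscInfSup} and invoking the standard saddle point theory. Your added finite-dimensional injectivity argument and the remark about carrying the mesh-ratio hypotheses are sensible elaborations but do not change the substance.
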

Since the fully discrete saddle point system is well-posed, we may now examine a solution method for the matrix system arising from the fully discrete formulation.

\section{Partitioned Method.}\label{sec:IFRsec}

In this section, we present the fully discretized formulation and our partitioned scheme for its solution. Let $\{v_j\}, \{q_j\}, \{\phi_j\},$ and $\{\mu_j\}$ be basis functions for the discrete solutions $\bm{u}^h, p^h, \bm{\eta}^h,$ and $\bm{g}^h$ of \eqref{WeakForm} and $\bm{u},\bm{p},\bm{\eta},\bm{g}$ represent the corresponding coefficient vectors. Then the discrete solutions may be expressed as linear combinations of these basis functions, i.e., $\bm{u}^h(x,y,t) = \sum_j \bm{u}_j(t) v_j(x,y)$. Substituting these linear combinations into \eqref{WeakFormTimeDisc} and taking the test functions to be the appropriate basis functions results in a linear system:
\begin{align}\label{SchurDiscTime}
\begin{split}
 M_f \bm{u}^{n+1} +  \Delta t K_f \bm{u}^{n+1} - \Delta t P \bm{p}^{n+1} - \Delta t G_f^T \bm{g}^{n+1} &= \Delta t \overline{\bm{f_f}}^{n+1} + M_f \bm{u}^n, \\
P^T \bm{u}^{n+1} &= \bm{0}, \\
\frac{1}{\Delta t} M_s \bm{\eta}^{n+1} + \Delta t (K_s + L) \bm{\eta}^{n+1} + \Delta t G_s^T \bm{g}^{n+1} &= \Delta t \overline{\bm{f_s}}^{n+1} + \frac{2}{\Delta t} M_s \bm{\eta}^n - \frac{1}{\Delta t} M_s \bm{\eta}^{n-1},\\
 \frac{1}{\Delta t} G_s \bm{\eta}^{n+1} -  G_f \bm{u}^{n+1} &=  \frac{1}{\Delta t} G_s \bm{\eta}^n,
\end{split}
\end{align}
where $M_f, K_f, M_s, K_s$ are mass and stiffness matrices for the fluid and structure, $P$ is the pressure matrix, $L$ represents the divergence terms for the structure, and $\overline{\bm{f}_f}, \overline{\bm{f}_s}$ contain the body forces and Neumann boundary terms. The $G$ matrices represent the interaction between the interface and subdomain bases: for $d=2$,  
\begin{align*}
G_f = \begin{bmatrix}
G_{f}^1 & 0 \\ 0 & G_{f}^2
\end{bmatrix}, \hspace{5mm} \text{ with } &\Big(G_{f}^r\Big)_{i,j} = \langle v_j, \mu_i\rangle_\gamma \text{  for } r=1,2 \\
G_s = \begin{bmatrix}
G_{s}^1 & 0 \\ 0 & G_{s}^2
\end{bmatrix}, \hspace{5mm} \text{ with } &\Big(G_{s}^r\Big)_{i,j} = \langle \varphi_j, \mu_i\rangle_\gamma \text{  for } r=1,2. 
\end{align*}
Let $N_u, N_p, N_\eta, N_\gamma $ be the total number of degrees of freedom for each variable. Matrix dimensions are
$$M_f, K_f: N_u \times N_u \hspace{5mm} 
M_s, K_s, L_s: N_\eta \times N_\eta \hspace{5mm} P: N_u \times N_p \hspace{5mm} G_f: N_\gamma \times N_u \hspace{5mm} G_s:N_\gamma \times N_\eta.$$ 

To solve the original FSI system, \eqref{StokesMom}-\eqref{Inter2}, we now must solve the matrix system \eqref{SchurDiscTime}. Our goal is to solve this system by expressing the LM coefficient vector $\bm{g}^{n+1}$ as an implicit function of the other variables, decoupling the system. 
At this stage, we can solve the fluid and structure equations for $\bm{g}^{n+1}$ and substitute into the constraint equation to express $\bm{g}^{n+1}$ in terms of all three physical variables. However, without initial conditions for the pressure variable, or a way to separately update it on each time step, we will be unable to fully decouple the system. Thus, we consider grouping the pressure variable and Lagrange multiplier in order to effect the desired decoupling. We define  
\begin{align*}
\bm{z}^{n+1} := \begin{bmatrix}
\bm{p}^{n+1}  \\ \bm{g}^{n+1}
\end{bmatrix}, \hspace{5mm} A_f := \begin{bmatrix}
P^T \\ G_f
\end{bmatrix}, \hspace{5mm} A_s := \begin{bmatrix}
0_{N_p \times N_\eta} \\ G_s
\end{bmatrix}.
\end{align*}
For consistency with the variables used in the saddle point system \eqref{eq:SaddlePtDisc}, we consider the variables $\tilde{\bm{\eta}}^{n+1} := \frac{1}{\Delta t} \bm{\eta}^{n+1}$ and $\tilde{\bm{z}}^{n+1} := \Delta t \bm{z}^{n+1}$.
Using these definitions, we rewrite \eqref{SchurDiscTime}:
\begin{align}\label{withA}
\begin{split}
M_f \bm{u}^{n+1} + \Delta t K_f \bm{u}^{n+1} -  A_f^T \tilde{\bm{z}}^{n+1} &= \Delta t \overline{\bm{f_f}}^{n+1} +  M_f \bm{u}^n, \\
 M_s \tilde{\bm{\eta}}^{n+1} + \Delta t^2 (K_s + L) \tilde{\bm{\eta}}^{n+1} +  A_s^T \tilde{\bm{z}}^{n+1} &= \Delta t \overline{\bm{f_s}}^{n+1} + \frac{2}{\Delta t} M_s \bm{\eta}^n - \frac{1}{\Delta t} M_s \bm{\eta}^{n-1},\\
 A_s \tilde{\bm{\eta}}^{n+1} -  A_f \bm{u}^{n+1} &= \begin{bmatrix}
0_{N_p \times 1} \\  \frac{1}{\Delta t} G_s \bm{\eta}^n
\end{bmatrix}.
\end{split}
\end{align}

\noindent For ease of notation, define
\begin{align*}
W_f &:=M_f + \Delta t K_f,  \hspace{20mm}
W_s :=  M_s + \Delta t^2 (K_s + L), \\
\bm{w_1}^n &:= \Delta t\overline{\bm{f_f}}^{n+1} +  M_f \bm{u}^n, \hspace{10mm}
\bm{w_2}^n :=  \Delta t \overline{\bm{f_s}}^{n+1} + \frac{2}{\Delta t} M_s \bm{\eta}^n - \frac{1}{\Delta t} M_s \bm{\eta}^{n-1}, \hspace{10mm}
\bm{w_3}^n &:= \begin{bmatrix}
0_{N_p \times 1} \\ \frac{1}{\Delta t} G_s \bm{\eta}^n
\end{bmatrix}.
\end{align*}
Assuming that $W_f, W_s$ are full rank, we can solve the first two equations of \eqref{withA} for $\bm{u}^{n+1}$ and $\bm{\eta}^{n+1}$:
\begin{align}\label{aandc}
\begin{split}
\bm{u}^{n+1} &= W_f^{-1} (\bm{w_1}^n +  A_f^T \tilde{\bm{z}}^{n+1}), \\
\tilde{\bm{\eta}}^{n+1} &= W_s^{-1} (\bm{w_2}^n -  A_s^T \tilde{\bm{z}}^{n+1}) .
\end{split}
\end{align} 
Plugging these into the third equation of \eqref{withA} and rearranging yields:
\begin{align}\label{SchurBE}
 \Big( A_f W_f^{-1} A_f^T  + A_s W_s^{-1} A_s^T \Big) \tilde{\bm{z}}^{n+1} = A_s W_s^{-1} \bm{w_2}^n - A_f W_f^{-1} \bm{w_1}^n - \bm{w_3}^n.
\end{align}

\noindent We refer to \eqref{SchurBE} as the Schur complement equation, with matrix $S :=  A_f W_f^{-1} A_f^T  +  A_s W_s^{-1} A_s^T  $. As desired, it expresses the variable $\tilde{\bm{z}}^{n+1}$ implicitly in terms of the variables $\bm{u}^n, \bm{\eta}^n, \bm{\eta}^{n-1}$. If $S$ is full rank, the Schur complement equation can be solved for $\tilde{\bm{z}}^{n+1}$, which will allow us to solve for  $\bm{u}^{n+1}$ and $\tilde{\bm{\eta}}^{n+1}$ in \eqref{aandc} explicitly and independently. The partitioned method is summarized in Algorithm \ref{alg:SchurAlg}.


\begin{algorithm}
\caption{Schur Complement Algorithm}
\label{alg:SchurAlg}
\begin{algorithmic}
\STATE{Let $\tilde{\bm{\eta}}^{n+1} := \frac{1}{\Delta t} \bm{\eta}^{n+1}$ and $\tilde{\bm{z}}^{n+1} := \Delta t \bm{z}^{n+1}$. Given $\bm{u}^0, \bm{\eta}^0, \bm{\dot{\eta}}^0$ and $N = T / \Delta t$:}
\FOR{n=0,1,...N-1}
\STATE Compute
$\bm{w}_1^n :=  \Delta t \overline{\bm{f_f}}^{n+1} + M_f \bm{u}^n \text{ and }
\bm{w}_3^n :=  \begin{bmatrix}
0_{N_p \times 1} \\ \frac{1}{\Delta t}G_s \bm{\eta}^n
\end{bmatrix}$
\IF{n=0} 
\STATE $\bm{w}_2^0 := \Delta t \overline{\bm{f_s}}^{1} + \frac{1}{\Delta t} M_s \bm{\eta}^0 +  M_s \bm{\dot{\eta}}^0$ \\
\ELSE
\STATE
$ \bm{w}_2^n := \Delta t \overline{\bm{f_s}}^{n+1} + \frac{2}{\Delta t}M_s \bm{\eta}^n - \frac{1}{\Delta t} M_s \bm{\eta}^{n-1}
$
\ENDIF
\STATE Solve 
$\left( A_f W_f^{-1} A_f^T  + A_s W_s^{-1} A_s^T \right) \tilde{\bm{z}}^{n+1} =  A_s W_s^{-1} \bm{w}_2^n -  A_f W_f^{-1} \bm{w}_1^n - \bm{w_3}^n 
$  
\STATE Solve
$
W_f \bm{u}^{n+1} = \bm{w}_1^n + A_f^T \tilde{\bm{z}}^{n+1} \text{ and } 
W_s \tilde{\bm{\eta}}^{n+1} = \bm{w}_2^n - A_s^T \tilde{\bm{z}}^{n+1} $
\ENDFOR
\RETURN $\bm{u}^{n+1}, \tilde{\bm{\eta}}^{n+1}$.
\end{algorithmic}
\end{algorithm}
\begin{remark}
    The pressure can be extracted at any time from the variable $\bm{z}^{n+1} = \frac{1}{\Delta t} \tilde{\bm{z}}^{n+1}$. We recover $\bm{\eta}^{n+1} = \Delta t \tilde{\bm{\eta}}^{n+1}$.
\end{remark}

\subsection{Preconditioner}
Consider the Schur complement equation \eqref{SchurBE}, solved in the second stage of Algorithm \ref{alg:SchurAlg}. The Schur complement matrix, $S$, is time-independent, hence, the use of a direct solver for \eqref{SchurBE} can be considered. The system is also solvable without explicitly constructing $S =  A_f W_f^{-1} A_f^T +  A_s W_s^{-1} A_s^T$. Since $S$ is symmetric and positive definite, we may use the conjugate gradient algorithm to solve \eqref{SchurBE}, which only requires matrix-vector products. If the product $Sz$ is desired, it can be constructed as follows:
\begin{enumerate}
\item Solve the system $W_f x_f = A_f^T z$ for $x_f$. 
\item Solve the system $W_s x_s = A_s^T z$ for $x_s$.
\item Then $Sz =  A_f x_f +  A_s x_s$.
\end{enumerate}
Also, note that in the right hand side of \eqref{SchurBE}, the inverse matrices $W_s^{-1}$ and $W_f^{-1}$ appear. We may also avoid constructing these by solving the systems $W_s \bm{y_s} = \bm{w}_2^n$ and $W_f \bm{y_f} = \bm{w}_1^n$. Then the right hand side is calculated as:
\begin{align*}
F^n &=  A_s W_s^{-1} \tilde{\bm{w}_2}^n -  A_f W_f^{-1} \tilde{\bm{w}_1}^n - \bm{w_3}^n =  A_s \bm{y_s} - A_f \bm{y_f} - \bm{w_3}^n.
\end{align*}

Note that we can consider the Schur complement matrix as the sum of subdomain matrices; i.e. $S = S_f + S_s$, where $S_f =  A_f W_f^{-1} A_f^T$ and $S_s =  A_s W_s^{-1} A_s^T$. If we want to use the preconditioned conjugate gradient (PCG) method to solve \eqref{SchurBE}, one option for a preconditioner for $S$ is to use either $S_f$ or $S_s$ \cite{Quarteroni_1999}. Using $S_f$ as the preconditioner gives the system $S_f^{-1} S \tilde{\bm{z}}^{n+1} = S_f^{-1} F^n$. Again, we do not need to construct $S_f$ or its inverse explicitly in the PCG algorithm. If we want the matrix-vector product $S_f^{-1} \bm{y}$, we can solve the following system:
\begin{align*}
\begin{bmatrix}
W_f & A_f^T \\ A_f & 0
\end{bmatrix} \begin{bmatrix}
\bm{\beta} \\ \bm{x}
\end{bmatrix} = \begin{bmatrix}
0 \\ \bm{y}
\end{bmatrix}
\end{align*}
for the vector $[\bm{\beta}, \bm{x}]^T$. Then $\bm{x} = -S_f^{-1}\bm{y}$.

\section{Conditioning of Schur Complement Matrix.}\label{sec:Conditioning}

Our approach to solving the FSI problem involves solution of a linear system involving the Schur complement matrix; thus, the conditioning of $S$ plays an important role in the performance of this algorithm. 

Recall the variational form for the singular values of a general matrix $B \in \mathbb{R}^{m\times n}$
\begin{align}\label{eq:variationalSigma}
    \sigma_{max}(B) = \underset{\bm{v} \in \mathbb{R}^n}{\text{max}} \underset{\bm{u} \in \mathbb{R}^m}{\text{max}} \frac{\bm{u}^T B \bm{v}}{|\bm{u}| |\bm{v}|}, \hspace{5mm}
     \sigma_{min}(B) = \underset{\bm{v} \in \mathbb{R}^n}{\text{min}} \underset{\bm{u} \in \mathbb{R}^m}{\text{max}} \frac{\bm{u}^T B \bm{v}}{|\bm{u}| |\bm{v}|},
\end{align}
with $\sigma_{max},\sigma_{min}$ being the largest and smallest singular values of the matrix.
Here, $|\cdot|$ represents the standard Euclidean norm for vectors. Likewise, the matrix norm $|B|$ induced by $|\cdot|$ is equivalent to the largest singular value, giving
\begin{equation}\label{eq:smaxNorm}
    |B| = \sigma_{max}(B), \text{ and } \hspace{1mm} |B^{-1}| = \frac{1}{\sigma_{min}(B)}.
\end{equation}
The condition number of $B$ may thus be written
\begin{align*}
    \kappa(B) &= |B| |B^{-1}| = \frac{\sigma_{max}(B)}{\sigma_{min}(B)}.
\end{align*}

\noindent From \eqref{SchurBE}, the Schur complement matrix is expressed as $S = A_s W_s^{-1} A_s^T + A_f W_f^{-1} A_f^T , $ which can be written in block form as follows: 
 \begin{align*}
 S &=  A W^{-1} A^T, \hspace{3mm} \text{ where } \\
        A = \begin{bmatrix}
            A_f \Big| A_s
        \end{bmatrix} = &\begin{bmatrix}
            P^T & 0 \\ G_f &  G_s
        \end{bmatrix}   \hspace{10mm}
        W = \begin{bmatrix}
            W_f & 0 \\ 0 & W_s
        \end{bmatrix}.
    \end{align*}
We would like to bound $\sigma_{max}(S)$ from above and bound $\sigma_{min}(S)$ from below. 
Using \eqref{eq:smaxNorm},
\begin{align*}
\sigma_{max}(S) &=  |S| \leq   |A| |W^{-1}| |A^T| 
= \sigma^2_{max}(A) \frac{1}{\mu_{min}(W)}.
\end{align*}
Since W is symmetric and positive definite, its eigenvalues ($\mu_i > 0$) and singular values ($\sigma_i$) are equivalent. Also, $W$ has an eigendecomposition; there exists a real orthogonal matrix $B$ such that $B^TW B = D$ and $B^T B = I$, where the columns of $B$ are the eigenvectors of $W$ and $D$ is a diagonal matrix containing the eigenvalues of $W$. All entries of $D$ must be positive. 

We now consider a bound for $\sigma_{\min}(S)$. From \eqref{eq:variationalSigma}, we can write 
\begin{align}
\begin{split}\label{eq:sigMinPart1}
 \sigma_{\min}(S) &= \underset{\bm{s} \in \mathbb{R}^{N_p + N_\gamma}}{\text{min}} \underset{\bm{g} \in \mathbb{R}^{N_p + N_\gamma}}{\text{max}} \frac{\bm{g}^T S \bm{s}}{|\bm{g}| |\bm{s}|} =   \underset{\bm{s} \in \mathbb{R}^{N_p + N_\gamma}}{\text{min}} \underset{\bm{g} \in \mathbb{R}^{N_p + N_\gamma}}{\text{max}} \frac{\bm{g}^T A W^{-1} A^T \bm{s}}{|\bm{g}| |\bm{s}|} \\
    & =   \underset{\bm{s} \in \mathbb{R}^{N_p + N_\gamma}}{\text{min}} \underset{\bm{g} \in \mathbb{R}^{N_p + N_\gamma}}{\text{max}} \frac{\bm{g}^T A B D^{-1}B^T A^T \bm{s}}{|\bm{g}| |\bm{s}|}.
    \end{split}
\end{align}
To examine this expression, define $R := AB$, where $R \in \mathbb{R}^{(N_p + N_\gamma) \times (N_u + N_\eta)}$. For simplicity, we say $R \in \mathbb{R}^{m\times n}$. Denote the $i^{th}$ column of $R$ by $\bm{r}_i$. We assume the entries of $D$ are ordered such that $\mu_{\max}(W) = d_{11} \geq \cdots \geq d_{nn} = \mu_{\min}(W) > 0.$
Then the min/max expression \eqref{eq:sigMinPart1} can be expanded as
\begin{align*}
   \underset{\bm{s}}{\min} \hspace{2mm} \underset{\bm{g}}{\max} \left( \bm{g}^T R D^{-1} R^T \bm{s} \right) & = \underset{\bm{s}}{\min} \hspace{2mm}\underset{\bm{g}} {\max} \left( d_{11}^{-1}(\bm{g}^T \bm{r}_1)(\bm{s}^T \bm{r}_1) + d_{22}^{-1}(\bm{g}^T \bm{r}_2)(\bm{s}^T \bm{r}_2) + \cdots + d_{nn}^{-1}(\bm{g}^T \bm{r}_n)(\bm{s}^T \bm{r}_n)       \right).
\end{align*}
For any given vector $\bm{s}$, we may find a $\bm{g}$ that makes each term positive. In particular, we always have the option to pick $\bm{g} = \bm{s}$. Thus, the $\bm{g}$ that maximizes the above expression must be one that makes each term positive. This allows us to bound each term from below, for any given $\bm{s}$:
 \begin{align*}
    &\underset{\bm{g}} {\max} \left( d_{11}^{-1}(\bm{g}^T \bm{r}_1)(\bm{s}^T \bm{r}_1) + d_{22}^{-1}(\bm{g}^T \bm{r}_2)(\bm{s}^T \bm{r}_2) + \cdots + d_{nn}^{-1}(\bm{g}^T \bm{r}_n)(\bm{s}^T \bm{r}_n)       \right) \\
    &\geq \underset{\bm{g}} {\max} \left( d_{11}^{-1}(\bm{g}^T \bm{r}_1)(\bm{s}^T \bm{r}_1) + d_{11}^{-1}(\bm{g}^T \bm{r}_2)(\bm{s}^T \bm{r}_2) + \cdots + d_{11}^{-1}(\bm{g}^T \bm{r}_n)(\bm{s}^T \bm{r}_n)       \right) 
\end{align*} 
Taking the minimum over $\bm{s}$ yields
\begin{align*}
    \sigma_{\min}(S) &=   \underset{\bm{s} \in \mathbb{R}^{N_p + N_\gamma}}{\text{min}} \underset{\bm{g} \in \mathbb{R}^{N_p + N_\gamma}}{\text{max}} \frac{\bm{g}^T R D^{-1} R^T \bm{s}}{|\bm{g}| |\bm{s}|} \\
    &\geq  d_{11}^{-1} \underset{\bm{s} \in \mathbb{R}^{N_p + N_\gamma}}{\text{min}} \underset{\bm{g} \in \mathbb{R}^{N_p + N_\gamma}}{\text{max}} \frac{\bm{g}^T A B B^T A^T \bm{s}}{|\bm{g}||\bm{s}|} \\
    &=  \mu_{\max}^{-1}(W) \underset{\bm{s} \in \mathbb{R}^{N_p + N_\gamma}}{\text{min}} \underset{\bm{g} \in \mathbb{R}^{N_p + N_\gamma}}{\text{max}} \frac{\bm{g}^T A A^T \bm{s}}{|\bm{g}||\bm{s}|} \\
    &=  \mu_{\max}^{-1}(W)\sigma_{\min}^2(A).
\end{align*}

\noindent Combining the expressions for $\sigma_{\max}(S)$ and $\sigma_{\min}(S)$, we write the condition number of the Schur complement matrix as
\begin{align}
    k(S) &= \frac{\sigma_{\max}(S)}{\sigma_{\min}(S)}  \leq \frac{ \mu^{-1}_{\min}(W) \sigma^{2}_{\max}(A)}{ \mu^{-1}_{\max}(W) \sigma^2_{\min}(A)} = (k(A))^2 k(W). \label{AW-cond}
\end{align}

The following lemmas are presented to estimate $k(A)$ and $k(W)$ in \eqref{AW-cond}. 
The first two lemmas are found in \cite{Quarteroni_1994}. See Section 6.3 in \cite{Quarteroni_1994} for proofs. 
\begin{lemma}\label{lem:normFEandvec}
 Assume $V^h$ is a generic finite element space with global shape functions $\{\phi_i\}$. Let $\mathbb{T}_h$ be a quasi-uniform family of triangulations of $\overline{\Omega}$. Then there exist positive constants $C_{\ell}, C_u$ such that for each $v^h \in V^h$, with $v^h = \sum_i v_i \phi_i$ and $\bm{v}$ the vector of the coefficients $v_i$,
\begin{align*}
    C_\ell h^d |\bm{v}|^2 \leq ||v^h||_{0}^2 \leq C_u h^d |\bm{v}|^2.
\end{align*}
\end{lemma}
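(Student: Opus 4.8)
The plan is to route everything through the mass matrix and a scaling argument on a reference element. First I would note that, by the definition of the coefficient vector,
\begin{align*}
||v^h||_0^2 = (v^h, v^h)_\Omega = \sum_{i,j} v_i v_j \,(\phi_i,\phi_j)_\Omega = \bm{v}^T M \bm{v}, \qquad M_{ij} := (\phi_i,\phi_j)_\Omega,
\end{align*}
where $M$ is the symmetric positive definite mass matrix. Since $\mu_{\min}(M)|\bm{v}|^2 \leq \bm{v}^T M \bm{v} \leq \mu_{\max}(M)|\bm{v}|^2$, the lemma is equivalent to the spectral estimate $C_\ell h^d \leq \mu_{\min}(M) \leq \mu_{\max}(M) \leq C_u h^d$, and it suffices to prove the two-sided bound elementwise and then reassemble.

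To localize, I would write $||v^h||_0^2 = \sum_{K \in \mathbb{T}_h} ||v^h||_{0,K}^2$ and pull each element $K$ back to the reference element $\hat{K}$ through the affine map $F_K : \hat{K} \to K$. The change of variables gives $||v^h||_{0,K}^2 = |\det(J_K)| \, ||\hat{v}||_{0,\hat{K}}^2$ with $\hat{v} := v^h \circ F_K$ and $J_K$ the constant Jacobian. Quasi-uniformity of $\mathbb{T}_h$ forces $|\det(J_K)|$ to be bounded above and below by fixed multiples of $h^d$, uniformly in $K$. The core estimate is then the norm equivalence on the fixed finite-dimensional space of shape functions on $\hat{K}$: because all norms on a finite-dimensional space are equivalent, there are constants $\hat{C}_\ell, \hat{C}_u > 0$ depending only on the reference element with $\hat{C}_\ell |\hat{\bm{v}}_K|^2 \leq ||\hat{v}||_{0,\hat{K}}^2 \leq \hat{C}_u |\hat{\bm{v}}_K|^2$, where $\hat{\bm{v}}_K$ collects the local coefficients of $\hat{v}$. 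Combining, $||v^h||_{0,K}^2$ is bounded above and below by fixed multiples of $h^d |\hat{\bm{v}}_K|^2$ on every element.

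The last step, and the only one requiring care, is reassembling the local coefficient vectors into the global one. Summing the elementwise bounds and using $\sum_{K} |\hat{\bm{v}}_K|^2 = \sum_i v_i^2 \, \#\{K : \phi_i \text{ does not vanish on } K\}$, I observe that each global coefficient is counted at least once and at most $m$ times, where $m$ is the maximal number of elements sharing a degree of freedom. Hence $|\bm{v}|^2 \leq \sum_K |\hat{\bm{v}}_K|^2 \leq m\,|\bm{v}|^2$, and collecting constants yields the claim with $C_\ell, C_u$ absorbing $\hat{C}_\ell, \hat{C}_u$, the Jacobian constants, and $m$. The main obstacle is controlling this overlap constant $m$ independently of $h$; this is precisely what quasi-uniformity (through the shape-regularity it entails) supplies, since it bounds the number of elements meeting at any node uniformly over the family of triangulations.
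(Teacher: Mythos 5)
Your argument is correct and coincides with the proof the paper relies on: the paper does not prove this lemma itself but defers to Section 6.3 of the cited Quarteroni--Valli reference, where exactly this reference-element scaling argument (mass-matrix spectral reformulation, local norm equivalence on the fixed finite-dimensional shape-function space, Jacobian bounds $|\det J_K| \sim h^d$ from quasi-uniformity, and bounded-overlap reassembly of the local coefficient vectors) is carried out. There are no gaps; your closing observation that the overlap constant $m$ is controlled by the shape regularity implied by quasi-uniformity is precisely the point that makes the constants $h$-independent.
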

\begin{lemma}\label{lem:stiffEigBound}
    The eigenvalues $\mu_i(K)$ of the stiffness matrix $K$ satisfy the following bounds:
    \begin{equation}\label{eq:stiffEigBound}
        \alpha \delta C_\ell h^d \leq \mu_i(K) \leq \alpha^* \delta C_u h^d (1+C^* h^{-2}).
    \end{equation}
    where $\alpha$ and $\alpha^*$ are the coercivity and continuity constants, respectively, and $\delta =2\nu_f$ or $2\nu_s$ for the FSI model.
\end{lemma}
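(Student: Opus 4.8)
The plan is to reduce the eigenvalue bounds to two-sided estimates on the Rayleigh quotient of $K$, and then feed in coercivity, continuity, the norm equivalence of Lemma \ref{lem:normFEandvec}, and an inverse inequality. The starting point is the algebraic identity relating the quadratic form of $K$ to its generating bilinear form: if $K$ is assembled from $a_K(\bm{w},\bm{v}) := \delta\,(D(\bm{w}),D(\bm{v}))$, i.e.\ $K_{ij}=a_K(\phi_j,\phi_i)$, then for any $v^h=\sum_i v_i\phi_i$ with coefficient vector $\bm{v}$ one has $\bm{v}^T K \bm{v}=a_K(v^h,v^h)$. Since $K$ is symmetric (and positive definite by coercivity), the Courant--Fischer theorem guarantees that every eigenvalue $\mu_i(K)$ lies between the minimum and maximum of $\bm{v}^T K\bm{v}/|\bm{v}|^2$, so it suffices to bound this ratio uniformly from below and above.

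For the lower bound I would apply coercivity of $a_K$, namely $a_K(v^h,v^h)\ge \alpha\delta\,||v^h||_1^2$, then discard the strain contribution to the $H^1$ norm using $||v^h||_1^2\ge ||v^h||_0^2$, and finally invoke the left inequality of Lemma \ref{lem:normFEandvec}, $||v^h||_0^2\ge C_\ell h^d |\bm{v}|^2$. Chaining these gives $\bm{v}^T K\bm{v}\ge \alpha\delta C_\ell h^d|\bm{v}|^2$, hence $\mu_i(K)\ge \alpha\delta C_\ell h^d$, which is the claimed lower bound and carries no negative power of $h$.

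For the upper bound I would start from continuity, $a_K(v^h,v^h)\le \alpha^*\delta\,||v^h||_1^2$, and split the norm by its definition $||v^h||_1^2=||v^h||_0^2+||D(v^h)||_0^2$. The $L^2$ term is controlled directly by the right inequality of Lemma \ref{lem:normFEandvec}, while the strain term is where the $h^{-2}$ factor enters: on a quasi-uniform family $\mathbb{T}_h$ the inverse inequality $||D(v^h)||_0^2\le C^* h^{-2}||v^h||_0^2$ holds, so that $||v^h||_1^2\le (1+C^*h^{-2})||v^h||_0^2\le C_u h^d(1+C^*h^{-2})|\bm{v}|^2$. Combining these yields $\mu_i(K)\le \alpha^*\delta C_u h^d(1+C^*h^{-2})$, exactly as stated.

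The main obstacle is the strain estimate: unlike the other ingredients, it is not an immediate consequence of coercivity, continuity, or Lemma \ref{lem:normFEandvec}, but relies essentially on the quasi-uniformity of $\mathbb{T}_h$ (a local inverse inequality on each element, summed over the mesh, with the regularity constant absorbed into $C^*$). Once that inequality is in hand the remainder is bookkeeping of constants; this is precisely the argument given in Section 6.3 of \cite{Quarteroni_1994}, specialized here to the strain-based form $a_K$ with viscosity/Lam\'e factor $\delta$.
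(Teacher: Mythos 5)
Your argument is correct and is essentially the proof the paper defers to (Section 6.3 of the cited Quarteroni--Valli reference), and it mirrors exactly the Rayleigh-quotient technique the paper itself uses in the Appendix for the divergence matrix $L$: identify $\bm{v}^T K\bm{v}$ with the bilinear form evaluated at $v^h$, then combine coercivity/continuity with Lemma \ref{lem:normFEandvec} and the inverse inequality. No gaps; the only point worth being explicit about is that the lower bound's coercivity of the strain form in the full $H^1$ norm rests on Korn's inequality together with the Dirichlet boundary portion, which is what the constant $\alpha$ encodes.
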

Next, we present a similar result for the matrix $L$, comprised of the divergence terms. 
\begin{lemma}\label{lem:divEigBound}
    The eigenvalues $\mu_i(L)$ of the divergence matrix $L$ satisfy the following bounds:
    \begin{equation}\label{eq:divEigBound}
        0 \leq \mu_i(L) \leq \lambda d C_u C^* h^{d-2}.
    \end{equation}
\end{lemma}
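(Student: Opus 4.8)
The plan is to reproduce the reasoning behind Lemma~\ref{lem:stiffEigBound}, with the full strain tensor replaced by its trace. Writing the entries of the divergence matrix as $L_{ij}=\lambda(\nabla\cdot\bm{\varphi}_j,\nabla\cdot\bm{\varphi}_i)_{\Omega_s}$, we see that $L$ is symmetric and that, for any coefficient vector $\bm{v}$ with associated finite element function $v^h=\sum_i v_i\bm{\varphi}_i$,
\[
\bm{v}^T L\,\bm{v}=\lambda\,||\nabla\cdot v^h||_0^2\geq 0 .
\]
Hence $L$ is positive semidefinite and every eigenvalue obeys $0\leq\mu_i(L)$, which is the lower bound. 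For the upper bound I would use the Rayleigh quotient: each eigenvalue is attained by a real eigenvector $\bm{v}$ with $\mu_i(L)=\lambda\,||\nabla\cdot v^h||_0^2/|\bm{v}|^2$, so it suffices to control $||\nabla\cdot v^h||_0$ by $|\bm{v}|$.

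The key chain of inequalities uses three facts in turn. First, since $\nabla\cdot v^h=\operatorname{tr}(D(v^h))$ is a sum of $d$ entries of $D(v^h)$, the Cauchy--Schwarz inequality gives the pointwise bound $|\nabla\cdot v^h|^2\leq d\,|D(v^h)|^2$, and therefore $||\nabla\cdot v^h||_0^2\leq d\,||D(v^h)||_0^2$. Second, the inverse inequality on the quasi-uniform family $\mathbb{T}_h$ --- the same one underlying \eqref{eq:stiffEigBound} --- yields $||D(v^h)||_0^2\leq C^* h^{-2}||v^h||_0^2$. Third, the mass-matrix norm equivalence of Lemma~\ref{lem:normFEandvec} gives $||v^h||_0^2\leq C_u h^d|\bm{v}|^2$. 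Chaining these,
\[
\lambda\,||\nabla\cdot v^h||_0^2\leq \lambda\, d\, C^* h^{-2}\,C_u h^d\,|\bm{v}|^2=\lambda\, d\, C_u C^* h^{d-2}\,|\bm{v}|^2,
\]
and dividing by $|\bm{v}|^2$ produces exactly $\mu_i(L)\leq\lambda\, d\, C_u C^* h^{d-2}$.

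I do not expect a genuine obstacle: the argument is routine and runs in parallel with the proof of Lemma~\ref{lem:stiffEigBound}. The only real care is in constant bookkeeping --- confirming that the divergence-to-strain step contributes precisely the dimensional factor $d$ (from Cauchy--Schwarz over the $d$ diagonal entries of $D(v^h)$) and that the constants $C^*$ and $C_u$ are identical to those already introduced, so that \eqref{eq:divEigBound} is consistent with \eqref{eq:stiffEigBound}. One should also note that quasi-uniformity of $\mathbb{T}_h$ and the fact that $\Omega_s$ is fixed are what make the inverse inequality hold with a single mesh-independent constant $C^*$.
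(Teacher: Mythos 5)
Your proof is correct and follows essentially the same route as the paper's: both establish positive semidefiniteness from $\bm{v}^T L\bm{v}=\lambda\|\nabla\cdot v^h\|_0^2$ and bound the Rayleigh quotient by combining the dimensional factor $d$ from Cauchy--Schwarz, the inverse inequality, and Lemma~\ref{lem:normFEandvec}. The only cosmetic difference is that you pass through $\|D(v^h)\|_0$ where the paper uses $\|\nabla v^h\|_0$; since $\|D(v^h)\|_0\le\|\nabla v^h\|_0$, this changes nothing.
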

\begin{proof}
See Appendix.
\end{proof}
We now have the following estimate for the condition number of $W$ using Lemmas \ref{lem:normFEandvec}-\ref{lem:divEigBound}.
\begin{lemma}\label{lem:condWBound}
The condition number of $W$ is estimated as the following:
\begin{align}\label{eq:condW}
k(W) &= \frac{\mu_{\max}(W)}{\mu_{\min}(W)} 
= \frac{\max\Bigl\{C_{\max,f}h_1^d(1+\Delta t h_1^{-2}), C_{\max,s}h_2^d\left(1 + \Delta t^2 h_2^{-2}\right)\Bigr\}}{ \min \Bigl\{C_{\min,f}h_1^d(1+\Delta t), C_{\min,s} h_2^d(1+\Delta t^2) \Bigr\}}.
\end{align}
\end{lemma}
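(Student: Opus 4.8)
The plan is to exploit the block-diagonal structure $W = \mathrm{diag}(W_f, W_s)$ with $W_f = M_f + \Delta t K_f$ and $W_s = M_s + \Delta t^2(K_s + L)$. Since the spectrum of a symmetric block-diagonal matrix is the union of the spectra of its diagonal blocks, we have $\mu_{\max}(W) = \max\{\mu_{\max}(W_f), \mu_{\max}(W_s)\}$ and $\mu_{\min}(W) = \min\{\mu_{\min}(W_f), \mu_{\min}(W_s)\}$. It therefore suffices to bound the extreme eigenvalues of the two blocks separately and then form the quotient $k(W)=\mu_{\max}(W)/\mu_{\min}(W)$.

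For each block I would use the Rayleigh-quotient (Weyl) inequalities for sums of symmetric matrices, namely $\mu_{\min}(A+B) \geq \mu_{\min}(A) + \mu_{\min}(B)$ and $\mu_{\max}(A+B) \leq \mu_{\max}(A) + \mu_{\max}(B)$, which follow at once from splitting the Rayleigh quotient. For $W_f$, Lemma \ref{lem:normFEandvec} applied to the fluid mass matrix (noting $\bm{v}^T M_f \bm{v} = ||u^h||_0^2$) gives $C_\ell h_1^d \le \mu_i(M_f) \le C_u h_1^d$, while Lemma \ref{lem:stiffEigBound} gives $\alpha(2\nu_f) C_\ell h_1^d \le \mu_i(K_f) \le \alpha^*(2\nu_f) C_u h_1^d(1 + C^* h_1^{-2})$. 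Adding $\Delta t$ times the stiffness bound to the mass bound yields an upper estimate of the form $C_u h_1^d + \Delta t\,\alpha^*(2\nu_f)C_u h_1^d + \Delta t\,\alpha^*(2\nu_f)C_u C^* h_1^{d-2}$ and a lower estimate $C_\ell h_1^d + \Delta t\,\alpha(2\nu_f)C_\ell h_1^d$.

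The main bookkeeping obstacle is collapsing these three- and two-term expressions into the compact factored forms $C_{\max,f}h_1^d(1+\Delta t h_1^{-2})$ and $C_{\min,f}h_1^d(1+\Delta t)$. Under the standing quasi-uniformity assumption $h_1 \le 1$ one has $h_1^d \le h_1^{d-2}$, so the middle term $\Delta t\,\alpha^*(2\nu_f)C_u h_1^d$ is dominated by a constant multiple of $\Delta t h_1^{d-2}$; absorbing all remaining constants into $C_{\max,f}$ then gives the stated upper form. For the lower bound, $1 + \Delta t\,\alpha(2\nu_f) \ge \min\{1,\alpha(2\nu_f)\}(1+\Delta t)$ lets me factor out $(1+\Delta t)$ and read off $C_{\min,f}$.

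I would then repeat the argument for $W_s = M_s + \Delta t^2(K_s+L)$, now with three symmetric summands. Lemma \ref{lem:divEigBound} supplies $0 \le \mu_i(L) \le \lambda d C_u C^* h_2^{d-2}$, and, crucially, since $L$ is positive semidefinite it contributes only to the upper bound and leaves the lower bound $C_\ell h_2^d + \Delta t^2\alpha(2\nu_s)C_\ell h_2^d$ untouched. Combining the $K_s$ and $L$ upper bounds and again using $h_2 \le 1$ to absorb $h_2^d$ into $h_2^{d-2}$ produces $\mu_{\max}(W_s) \le C_{\max,s}h_2^d(1+\Delta t^2 h_2^{-2})$ and $\mu_{\min}(W_s) \ge C_{\min,s}h_2^d(1+\Delta t^2)$, with $C_{\max,s}, C_{\min,s}$ absorbing $\alpha^*(2\nu_s)$, $\lambda d$, $C^*$, and the mass constants. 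Taking the maximum of the two upper bounds in the numerator and the minimum of the two lower bounds in the denominator yields the claimed estimate \eqref{eq:condW}. The only real care needed throughout is tracking which terms scale like $h^d$ versus $h^{d-2}$ so that the $\Delta t$-dependence factors out exactly as written.
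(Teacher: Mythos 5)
Your proposal is correct and follows essentially the same route as the paper's proof: split the block-diagonal $W$ so its spectrum is the union of the spectra of $W_f$ and $W_s$, bound the eigenvalues of each block by summing the mass, stiffness, and (for the structure) divergence-matrix eigenvalue bounds, collapse into the factored forms, and take the quotient. The only cosmetic difference is that you obtain the mass-matrix eigenvalue bounds from Lemma \ref{lem:normFEandvec} via the Rayleigh quotient, whereas the paper cites an equivalent result from the literature; your explicit handling of the absorption of the $\Delta t\, h^d$ term into $\Delta t\, h^{d-2}$ is, if anything, more careful than the paper's.
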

\begin{proof}
See Appendix.
\end{proof}

\noindent Next, we examine the condition number of $
      A = \begin{bmatrix}
P^T & 0 \\ G_f & G_s
    \end{bmatrix}.$

\begin{lemma}\label{lemma:AsigmaBound}
For $\frac{h_\gamma}{h_1}, \frac{h_\gamma}{h_2}$ sufficiently large, the singular values of $A$ may be bounded as follows:
\begin{align*}
\sigma_{\max}(A) \leq C_{\max} h_2^{(d-2)/2} (h_2^d + h_\gamma^{d-1})^{1/2}, \hspace{3mm} \text{ and } \hspace{3mm} 
    \sigma_{\min}(A) \geq C_{\min} h_1^d.
    \end{align*}
    Thus, 
    \begin{align}\label{eq:condA}
        k(A) = \frac{\sigma_{\max}(A)}{\sigma_{\min}(A)} = \frac{C_{\max} h_2^{(d-2)/2} (h_2^d + h_\gamma^{d-1})^{1/2}}{C_{\min}h_1^d}.
    \end{align}
\end{lemma}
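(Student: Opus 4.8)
The plan is to treat the two bounds separately, using the variational characterization \eqref{eq:variationalSigma} of the singular values together with the correspondence between finite element functions and coefficient vectors from Lemma \ref{lem:normFEandvec}. The upper bound on $\sigma_{\max}(A)$ is a collection of operator-norm estimates on the three nonzero blocks $P^T$, $G_f$, $G_s$, whereas the lower bound on $\sigma_{\min}(A)$ is really the discrete inf-sup condition of Theorem \ref{thm:DiscInfSup} re-expressed in the Euclidean norm instead of the $Y$- and $Z$-norms; the translation between the two norms is where the mesh-ratio hypotheses $h_\gamma/h_1, h_\gamma/h_2$ large re-enter.

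For $\sigma_{\max}(A)$, I would first write, for $\bm{v}=(\bm{u},\bm{\eta})$,
\[
|A\bm{v}|^2 = |P^T\bm{u}|^2 + |G_f\bm{u}+G_s\bm{\eta}|^2 \le \sigma_{\max}^2(P)\,|\bm{u}|^2 + 2\sigma_{\max}^2(G_f)\,|\bm{u}|^2 + 2\sigma_{\max}^2(G_s)\,|\bm{\eta}|^2,
\]
so that $\sigma_{\max}^2(A)$ is controlled by $\sigma_{\max}^2(P)+\sigma_{\max}^2(G_f)+\sigma_{\max}^2(G_s)$. Each block is estimated by dualizing: for example $|G_s\bm{\eta}| = \max_{\bm{s}} (\bm{\eta}^h,\bm{s}^h)_\gamma/|\bm{s}|$, and a Cauchy--Schwarz step on $\gamma$ reduces this to $\|\bm{\eta}^h\|_{0,\gamma}\|\bm{s}^h\|_{0,\gamma}/|\bm{s}|$. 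I would bound $\|\bm{s}^h\|_{0,\gamma}\le C h_\gamma^{(d-1)/2}|\bm{s}|$ from Lemma \ref{lem:normFEandvec} applied on the $(d-1)$-dimensional interface, and $\|\bm{\eta}^h\|_{0,\gamma}$ by a continuous trace inequality followed by the inverse estimate $\|\bm{\eta}^h\|_{1,\Omega_s}\le C h_2^{-1}\|\bm{\eta}^h\|_{0,\Omega_s}$, which gives $\|\bm{\eta}^h\|_{0,\gamma}\le C h_2^{(d-2)/2}|\bm{\eta}|$. Combining produces the $h_2^{d-2}h_\gamma^{d-1}$ contribution; the $P^T$ and $G_f$ blocks are handled the same way (using the inverse estimate on $\nabla\cdot\bm{u}^h$ for $P^T$) and supply the $h_2^{2d-2}$ term, after which collecting dominant powers yields the stated form $C_{\max} h_2^{(d-2)/2}(h_2^d+h_\gamma^{d-1})^{1/2}$.

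For $\sigma_{\min}(A)$ I would use the codomain form $\sigma_{\min}(A)=\min_{\bm{z}}\max_{\bm{v}} \bm{z}^T A\bm{v}/(|\bm{z}||\bm{v}|)$ and observe that $\bm{z}^T A\bm{v}$ is precisely the discrete form $b(\bm{y}^h,\bm{z}^h)$ under the correspondences $\bm{y}^h\leftrightarrow\bm{v}$, $\bm{z}^h\leftrightarrow\bm{z}$. Fixing $\bm{z}$, Theorem \ref{thm:DiscInfSup} furnishes a $\bm{y}^h$ with $b(\bm{y}^h,\bm{z}^h)\ge\beta\|\bm{y}^h\|_Y\|\bm{z}^h\|_Z$. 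Using Lemma \ref{lem:normFEandvec} to bound $|\bm{v}|\le C h_{\min}^{-d/2}\|\bm{y}^h\|_Y$ with $h_{\min}=\min\{h_1,h_2\}$, and to bound $\|\bm{z}^h\|_Z$ below by $|\bm{z}|$, gives the claim; the second conversion is the delicate one. The pressure part $\|q^h\|_0\ge C h_1^{d/2}|\bm{q}|$ is immediate, but the negative-norm part $\|\bm{s}^h\|_{-1/2,\gamma}$ must be handled through the interface inverse inequality \eqref{InvIneq} with $t=-1/2,\ s=0$, yielding $\|\bm{s}^h\|_{-1/2,\gamma}\ge C h_\gamma^{1/2}\|\bm{s}^h\|_{0,\gamma}\ge C h_\gamma^{d/2}|\bm{s}|$. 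Since $h_\gamma$ dominates $h_1$, the pressure factor $h_1^{d/2}$ is the smaller one and sets the rate, so assembling the chain gives $\sigma_{\min}(A)\ge C h_{\min}^{d/2}h_1^{d/2}=C_{\min}h_1^d$ under $h_1\le h_2$.

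The main obstacle is the $\sigma_{\min}$ estimate, for two reasons. First, it is an inf-sup constant and cannot be read off entrywise: it genuinely requires Theorem \ref{thm:DiscInfSup}, which was itself proved only for $h_\gamma/h_1, h_\gamma/h_2$ large, and that is exactly the hypothesis reappearing here. Second, moving from the $Z$-norm to the Euclidean norm forces one to lower-bound the negative fractional norm $\|\bm{s}^h\|_{-1/2,\gamma}$, which is only possible on the discrete space via \eqref{InvIneq}; pinning down the $h_\gamma$ exponent there, and then verifying that the pressure contribution $h_1^{d/2}$ rather than the interface contribution determines the final rate, is the step most prone to error. By comparison, the $\sigma_{\max}$ bound is routine once the trace/inverse chain is fixed.
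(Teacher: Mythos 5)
Your proof is correct and follows essentially the same route as the paper: both identify $\bm{y}^T A^T \bm{z}$ with $b(\bm{y}^h;\bm{z}^h)$, obtain the lower bound on $\sigma_{\min}(A)$ from the discrete inf-sup condition of Theorem \ref{thm:DiscInfSup} combined with Lemma \ref{lem:normFEandvec} and the interface inverse inequality \eqref{InvIneq} (exactly where the $h_\gamma^{d/2}$ versus $h_1^{d/2}$ comparison and the mesh-ratio hypothesis enter), and both reach the upper bound through the same trace, inverse, and mass-matrix estimates. The only cosmetic difference is that you bound $\sigma_{\max}(A)$ blockwise on $P^T$, $G_f$, $G_s$, whereas the paper passes through the continuity constant of $b(\cdot;\cdot)$ in the $Y\times Z$ norms together with the ratios $\|\bm{y}^h\|_Y/|\bm{y}|$ and $\|\bm{z}^h\|_Z/|\bm{z}|$; the resulting powers of $h_1$, $h_2$, $h_\gamma$ are identical.
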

\begin{proof}
See Appendix.
\end{proof}

We are now ready to present the main results of this section.
Using \eqref{eq:condW} and  \eqref{eq:condA}, a bound for the condition number of $S$, the Schur complement matrix, is estimated as:
\begin{align*} 
k(S) &=   \left(\frac{C_{\max} h_2^{(d-2)/2} (h_2^d + h_\gamma^{d-1})^{1/2}}{C_{\min}h_1^d}\right)^2\frac{\max\Bigl\{C_{\max,f}h_1^d(1+\Delta t h_1^{-2}), C_{\max,s}h_2^d\left(1 + \Delta t^2 h_2^{-2}\right)\Bigr\}}{ \min \Bigl\{C_{\min,f}h_1^d(1+\Delta t), C_{\min,s} h_2^d(1+\Delta t^2) \Bigr\}} \\
&=\left(\frac{C_{\max} h_2^{(d-2)} (h_2^d + h_\gamma^{d-1})}{C_{\min}h_1^{2d}}\right)\frac{\max\Bigl\{C_{\max,f}h_1^d(1+\Delta t h_1^{-2}), C_{\max,s}h_2^d\left(1 + \Delta t^2 h_2^{-2}\right)\Bigr\}}{ \min \Bigl\{C_{\min,f}h_1^d(1+\Delta t), C_{\min,s} h_2^d(1+\Delta t^2) \Bigr\}}.
\end{align*}

We have shown that the largest and smallest singular values of $S$ can be bounded in four different cases, leading to four different bounds on the condition number of $S$.
\begin{enumerate}
       \item When  $C_{\max,f}h_1^d(1+\Delta t h_1^{-2}) \geq C_{\max,s}h_2^d\left(1 + \Delta t^2 h_2^{-2}\right)$ and $C_{\min,f}h_1^d(1+\Delta t) \leq  C_{\min,s} h_2^d(1+\Delta t^2)$,
       
       \begin{align*}k(S) &\leq \left(\frac{C_{\max} h_2^{(d-2)} (h_2^d + h_\gamma^{d-1})}{C_{\min}h_1^{2d}}\right)\frac{C_{\max,f}h_1^d(1+\Delta t h_1^{-2})}{ C_{\min,f}h_1^d(1+\Delta t)}  \\
       &= \overline{C} h_2^{d-2} h_1^{-2d}(h_2^d + h_{\gamma}^{d-1})\frac{1+\Delta t h_1^{-2}}{1+\Delta t}.
       \end{align*} 
       
       \item When  $C_{\max,f}h_1^d(1+\Delta t h_1^{-2}) \geq C_{\max,s}h_2^d\left(1 + \Delta t^2 h_2^{-2}\right)$ and $C_{\min,f}h_1^d(1+\Delta t) \geq  C_{\min,s} h_2^d(1+\Delta t^2)$,
       
       \begin{align*}k(S) &\leq \left(\frac{C_{\max} h_2^{(d-2)} (h_2^d + h_\gamma^{d-1})}{C_{\min}h_1^{2d}}\right)\frac{C_{\max,f}h_1^d(1+\Delta t h_1^{-2})}{ C_{\min,s}h_2^d(1+\Delta t^2)}  \\
       &= \overline{C} h_2^{-2} h_1^{-d}(h_2^d + h_{\gamma}^{d-1})\frac{1+\Delta t h_1^{-2}}{1+\Delta t^2}.
       \end{align*} 
       
        \item When  $C_{\max,f}h_1^d(1+\Delta t h_1^{-2}) \leq C_{\max,s}h_2^d\left(1 + \Delta t^2 h_2^{-2}\right)$ and $C_{\min,f}h_1^d(1+\Delta t) \leq  C_{\min,s} h_2^d(1+\Delta t^2)$,
       
       \begin{align*}k(S) &\leq \left(\frac{C_{\max} h_2^{(d-2)} (h_2^d + h_\gamma^{d-1})}{C_{\min}h_1^{2d}}\right)\frac{C_{\max,s}h_2^d(1+\Delta t^2 h_2^{-2})}{ C_{\min,f}h_1^d(1+\Delta t)}  \\
       &= \overline{C} h_2^{2d-2} h_1^{-3d}(h_2^d + h_{\gamma}^{d-1})\frac{1+\Delta t^2 h_2^{-2}}{1+\Delta t}.
       \end{align*} 

       \item When  $C_{\max,f}h_1^d(1+\Delta t h_1^{-2}) \leq C_{\max,s}h_2^d\left(1 + \Delta t^2 h_2^{-2}\right)$ and $C_{\min,f}h_1^d(1+\Delta t) \geq  C_{\min,s} h_2^d(1+\Delta t^2)$,
       
       \begin{align*}k(S) &\leq \left(\frac{C_{\max} h_2^{(d-2)} (h_2^d + h_\gamma^{d-1})}{C_{\min}h_1^{2d}}\right)\frac{C_{\max,s}h_2^d(1+\Delta t^2 h_2^{-2})}{ C_{\min,s}h_2^d(1+\Delta t^2)}  \\
       &= \overline{C} h_2^{d-2} h_1^{-2d}(h_2^d + h_{\gamma}^{d-1})\frac{1+\Delta t^2 h_2^{-2}}{1+\Delta t^2}.
       \end{align*} 

\end{enumerate}
In the case of equal meshes, where $h_1 = h_2 = h$, all four cases give:
\begin{align*}
      \kappa(S) &\leq \overline{C} (h^{-2} + h^{-d-2}h_{\gamma}^{d-1}) f(\Delta t,h,r,s), \hspace{5mm} \text{where } f(\Delta t,h,r,s) = \frac{1+\Delta t^r h^{-2}}{1+\Delta t^s}, \hspace{2mm} \text{ for } r,s \in \{1,2\}.
\end{align*}

\noindent We may always pick $\Delta t$ such that $\Delta t^r = h^2$, so that $f(\Delta t,h,r,s) \leq 2$. This gives
\begin{align*}
      \kappa(S) &\leq 2 \overline{C} (h^{-2} + h^{-d-2}h_{\gamma}^{d-1}). 
\end{align*}
Assuming $h_\gamma \approx K h$ gives
$\kappa(S) =  2\overline{C} (h^{-2} + h^{-3}).$
Overall, we have $\kappa(S) = O(h^{-3})$.

\begin{remark}
Note that system matrices for elliptic problems or linear elasticity scale as $h^{-2}$, \cite{Quarteroni_1994}.
For Schur complement matrices coupling two Stokes-like systems, one may expect the condition number to scale as $h^{-2}$ (\cite{Quarteroni_1994}, pp. 242, 303). Since our problem couples a parabolic and a hyperbolic PDE, it is reasonable that the Schur complement may include an extra factor of $h^{-1}$. This extra factor can be traced to our definition of the LM along the interface only, resulting in a factor of $h_\gamma^{d-1}$ instead of $h_\gamma^d$. 
\end{remark}

\section{Numerical Results.}\label{sec:Numerical}
We consider a manufactured solution to the FSI system \eqref{StokesMom}-\eqref{Inter2} for initial numerical results. On $\Omega_f = [0,1] \times [0,1]$ and $\Omega_s = [0,1] \times [1,2]$, define the velocity $\bm{u} = [u_1, u_2]^T$, pressure $p$, and displacement $\bm{\eta} = [\eta_1, \eta_2]^T$ as:
\begin{align*}
u_1  &= \text{cos (x+t) sin(y+t) + sin(x+t) cos(y+t)}  \\
u_2 &= \text{-cos(x+t) sin(y+t) - sin(x+t) cos(y+t)} \\
p &=  2\nu_f \text{(sin(x+t)sin(y+t) - cos(x+t)cos(y+t))}  + 2 \nu_s \text{cos(x+t) sin(y+t)} \\
\eta_1 &= \text{sin(x+t)sin(y+t)} \\
\eta_2 &= \text{cos(x+t) cos(x+t)}.
\end{align*}
Appropriate forcing functions $\bm{f_f}$, $\bm{f_s}$ and Neumann conditions $\bm{u_N}, \bm{\eta_N}$ are derived based on the FSI system defined in \eqref{StokesMom}-\eqref{Inter2}.
The $(P_2, P_1)$ pair is used for velocity and pressure, and $P_2$ is used for displacement. All constants ($\nu_f, \nu_s, \rho_f, \rho_s, \lambda$) are set to 1. We examine convergence in space and time, implementing Neumann boundary conditions on the right and left sides of $\Omega_f$, and Dirichlet elsewhere.


\begin{table}[h!]
\centering
\begin{tabular}{c|c|c|c|c|c|c|c|c|c|c}
 $\Delta x$     & $||\eta - \eta^h||_0$ & rate &  $||\eta - \eta^h||_1$ & rate &  $||u - u^h||_0$ & rate & $ ||u - u^h||_1$  & rate &  $||p - p^h||_0$ & rate \\
\hline
   1/2    &  1.936e-03 & -- & 2.967e-02 & -- & 2.538e-03 & -- & 3.822e-02 & -- & 2.266e-02 & -- \\
   1/4 & 2.421e-04 & 3.00 & 7.417e-03 & 2.00 & 3.203e-04 & 2.99 & 9.674e-03 & 1.98 & 3.848e-03 & 2.56 \\
   1/8 & 3.026e-05 & 3.00 & 1.854e-03 & 2.00 & 4.072e-05 & 2.98 & 2.462e-03 & 1.97 & 8.141e-04 & 2.24 \\
   1/16 & 3.783e-06 & 3.00 & 4.635e-04 & 2.00 & 5.162e-06 & 2.98 & 6.204e-04 & 1.99 & 1.969e-04 & 2.05 \\
   1/32 & 4.729e-07 & 3.00 & 1.159e-04 & 2.00 & 6.548e-07 & 2.98 & 1.555e-04 & 2.00 & 4.883e-05 & 2.01 \\
   1/64 & 5.956e-08 & 2.99 & 2.896e-05 & 2.00 & 8.544e-08 & 2.94 & 3.889e-05 & 2.00 & 1.219e-05 & 2.00
\end{tabular}
\caption{Convergence in space for $\Delta t = 10^{-5}$; $T=10^{-3}$}
 \label{table:SpaceConvMixedBCs}
\end{table}

\begin{table}[]
    \centering
    \begin{tabular}{c|c|c|c|c|c|c|c|c|c|c}
 $\Delta t$     & $||\eta - \eta^h||_0$ & rate &  $||\eta - \eta^h||_1$ & rate &  $||u - u^h||_0$ & rate & $ ||u - u^h||_1$  & rate &  $||p - p^h||_0$ & rate \\
 \hline
      1/4  & 7.599e-02 & -- & 3.715e-01 & -- & 1.372e-01 & -- & 5.281e-01 & -- & 3.018e-01 & -- \\
      1/8 & 4.104e-02 & 0.89 & 2.133e-01 & 0.80 & 7.888e-02 & 0.80 & 3.029e-01 & 0.80 & 1.755e-01 & 0.78 \\
      1/16 & 2.179e-02 & 0.91 & 1.185e-01 & 0.85 & 4.332e-02 & 0.86 & 1.664e-01 & 0.86 & 9.729e-02 & 0.85 \\
      1/32 & 1.141e-02 & 0.93 & 6.372e-02 & 0.89 & 2.300e-02 & 0.91 & 8.823e-02 & 0.92 & 5.179e-02 & 0.91 \\
      1/64 & 5.876e-03 & 0.96 & 3.331e-02 & 0.94 & 1.193e-02 & 0.95 & 4.571e-02 & 0.95 & 2.672e-02 & 0.95 \\
      1/128 & 2.990e-03 & 0.97 & 1.709e-02 & 0.96 & 6.079e-03 & 0.97 & 2.330e-02 & 0.97 & 1.355e-02 & 0.98
    \end{tabular}
    \caption{Convergence in time for $\Delta x = 1/32$; $T = 1$}
    \label{table:TimeConvMixedBCs}
\end{table}

As can be seen in Table \ref{table:SpaceConvMixedBCs} and Figure \ref{fig:FF_space_conv_mixed}, both the fluid velocity and structural displacement display approximately cubic convergence and quadratic convergence in the $L^2$ and $H^1$ norms, respectively. The pressure converges approximately quadratically, as well. We also see approximately linear convergence in time for all variables, as displayed in Table \ref{table:TimeConvMixedBCs} and  Figure \ref{fig:FF_time_conv_mixed}. 
Turning to Tables \ref{table:condSdeltxMixedBCs} and \ref{table:CondSDeltMixedBCs}, we observe that using the PCG method works as expected to decrease the condition number of the Schur complement matrix as well as the number of iterations required for solving the system. Table \ref{table:CondSDeltMixedBCs} shows $k(S)$ for a fixed $\Delta x$ and varied $\Delta t$; we include these results because although the Schur complement matrix is not time-dependent, it is dependent on the time-step $\Delta t$. We note that the highest condition numbers before preconditioning were only on the order of roughly $h^{-2}$, instead of the $h^{-3}$ bound proven. Perhaps in practice, the Schur complement matrix never reaches its maximum bounds. 

\begin{table}[]
\centering
\begin{tabular}{c|c|c|c|c}
 $\Delta x$   & CG: Iterations & PCG: Iterations & CG: k(S) & PCG: k(S)\\
 \hline
   1/2 & 15 & 6 & 17.45 & 10.54\\
   1/4 & 25 & 9 & 41.70 & 25.32\\
   1/8 & 39 & 13 & 131.78 & 63.87\\
   1/16 & 79 & 19 & 459.27 & 182.46\\
   1/32 & 150 & 26 & 1554.92 & 590.28\\
   1/64 & 274 & 34 & 4825.96 & 2091.33 
\end{tabular}
\caption{CG vs. PCG: iteration count and condition number of $S$; $\Delta t = 10^{-5}$; $T=10^{-3}$}
 \label{table:condSdeltxMixedBCs}
\end{table}
\begin{table}[h!]
\centering
\begin{tabular}{c|c|c}
 $\Delta t$ &  CG & PCG  \\
 \hline
   1/4 &  1535.25 & 28.91  \\
   1/8 &  1452.32 & 45.50 \\
   1/16 & 1375.43 & 74.56 \\
   1/32 & 1305.25 & 126.46 \\
   1/64 & 1231.64 & 186.36 \\
   1/128 & 1075.49 & 246.27 \\
\end{tabular}
\caption{Condition number of $S$ for CG and PCG; $\Delta x = 1/32$; $T=1$}
 \label{table:CondSDeltMixedBCs}
\end{table}

\begin{figure}[h!]
    \centering
    \begin{subfigure}[b]{0.45\textwidth} 
    \centering 
    \includegraphics[width=\textwidth]{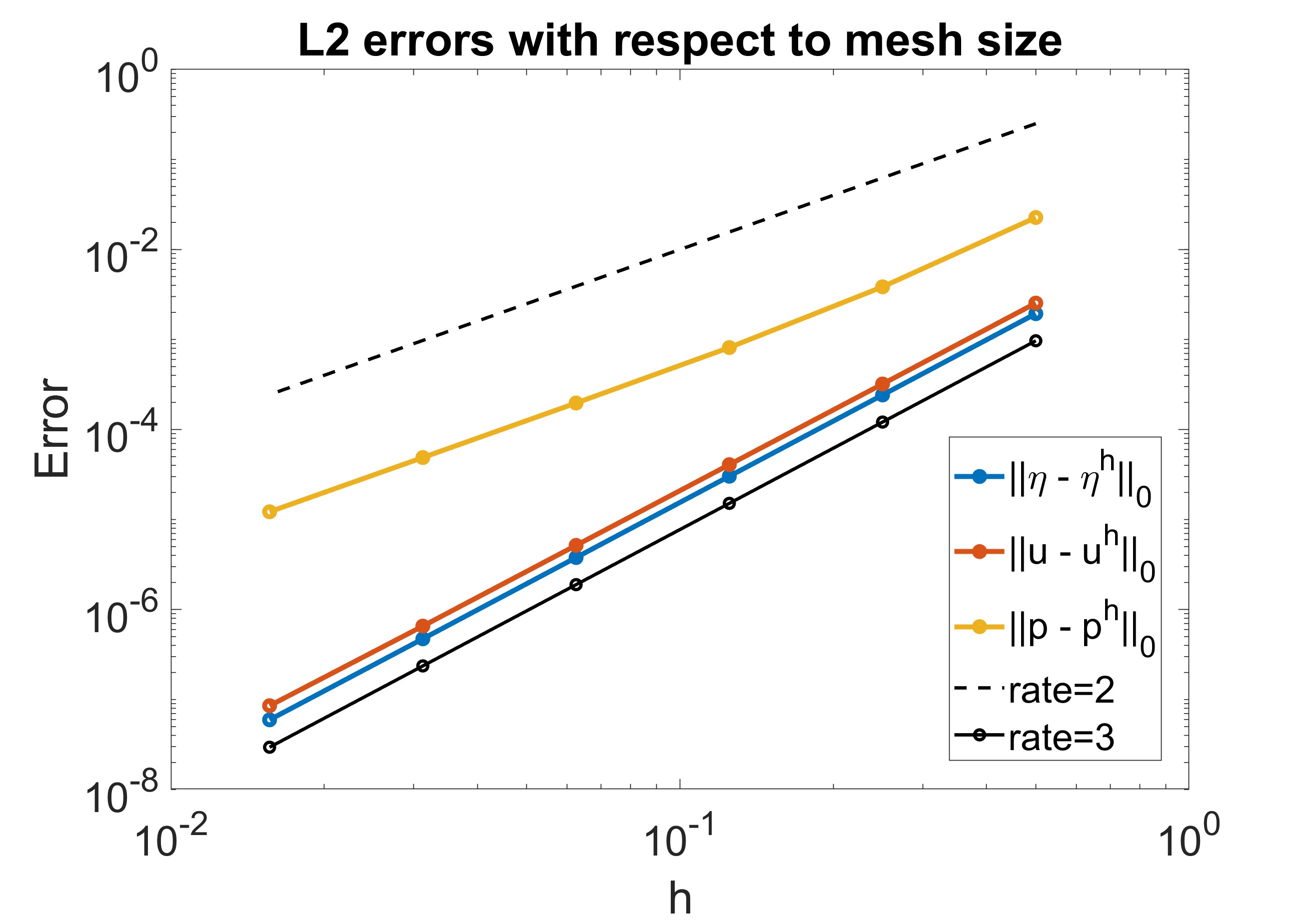}
        \caption{L2 errors}
        \label{fig:FF_L2_space_mixed}
    \end{subfigure}
     \begin{subfigure}[b]{0.45\textwidth}    
     \centering
     \includegraphics[width=\textwidth]{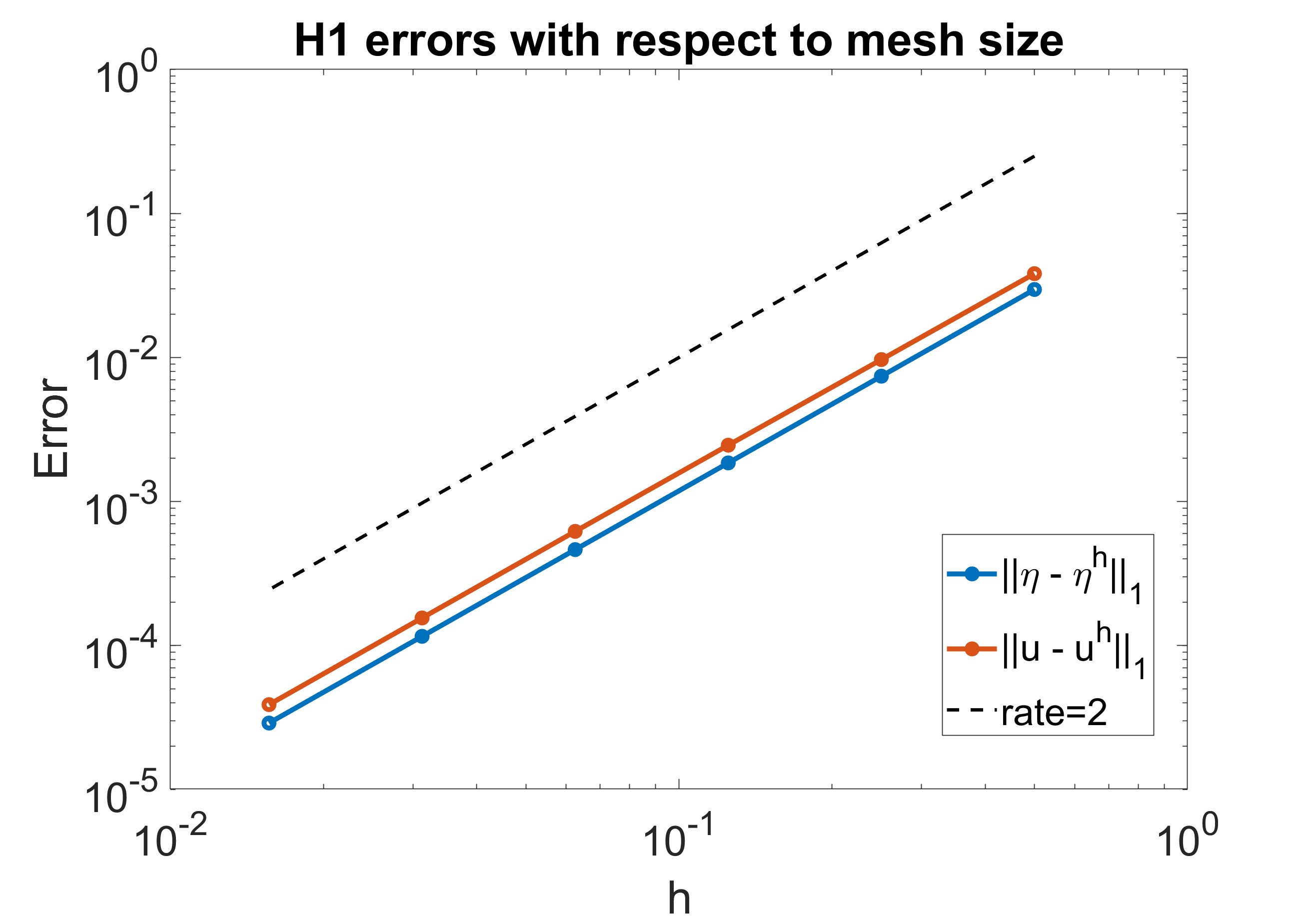}
        \caption{H1 errors}
        \label{fig:FF_H1_space_mixed}
    \end{subfigure}
    \caption{Convergence in space; $\Delta t=10^{-5}, T = 10^{-3}$}
    \label{fig:FF_space_conv_mixed}
\end{figure}

\begin{figure}[h!]
    \centering
    \begin{subfigure}[b]{0.45\textwidth} 
    \centering 
    \includegraphics[width=\textwidth]{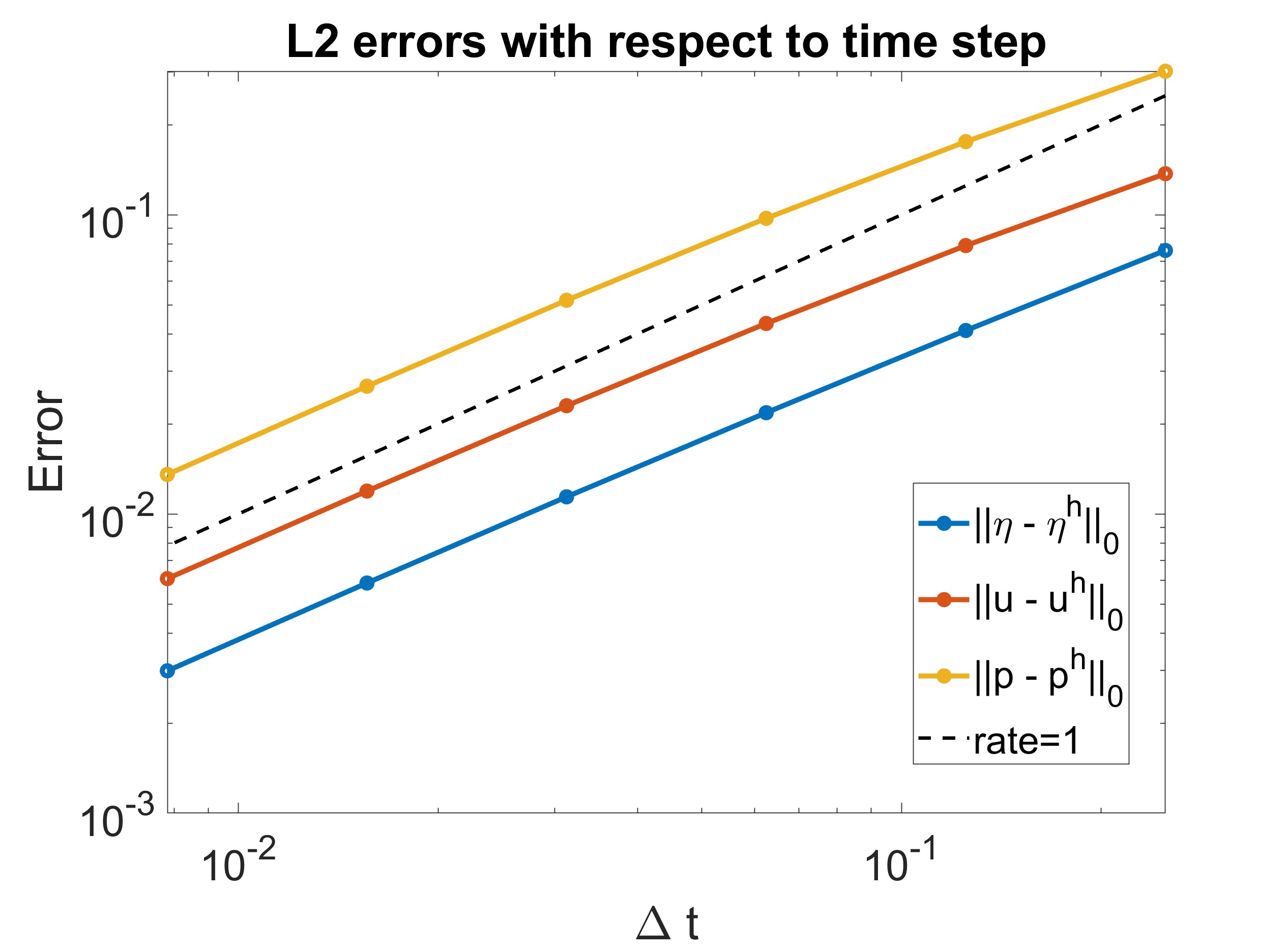}
        \caption{L2 errors}
        \label{fig:FF_L2_time_mixed}
    \end{subfigure}
     \begin{subfigure}[b]{0.45\textwidth}    
     \centering
     \includegraphics[width=\textwidth]{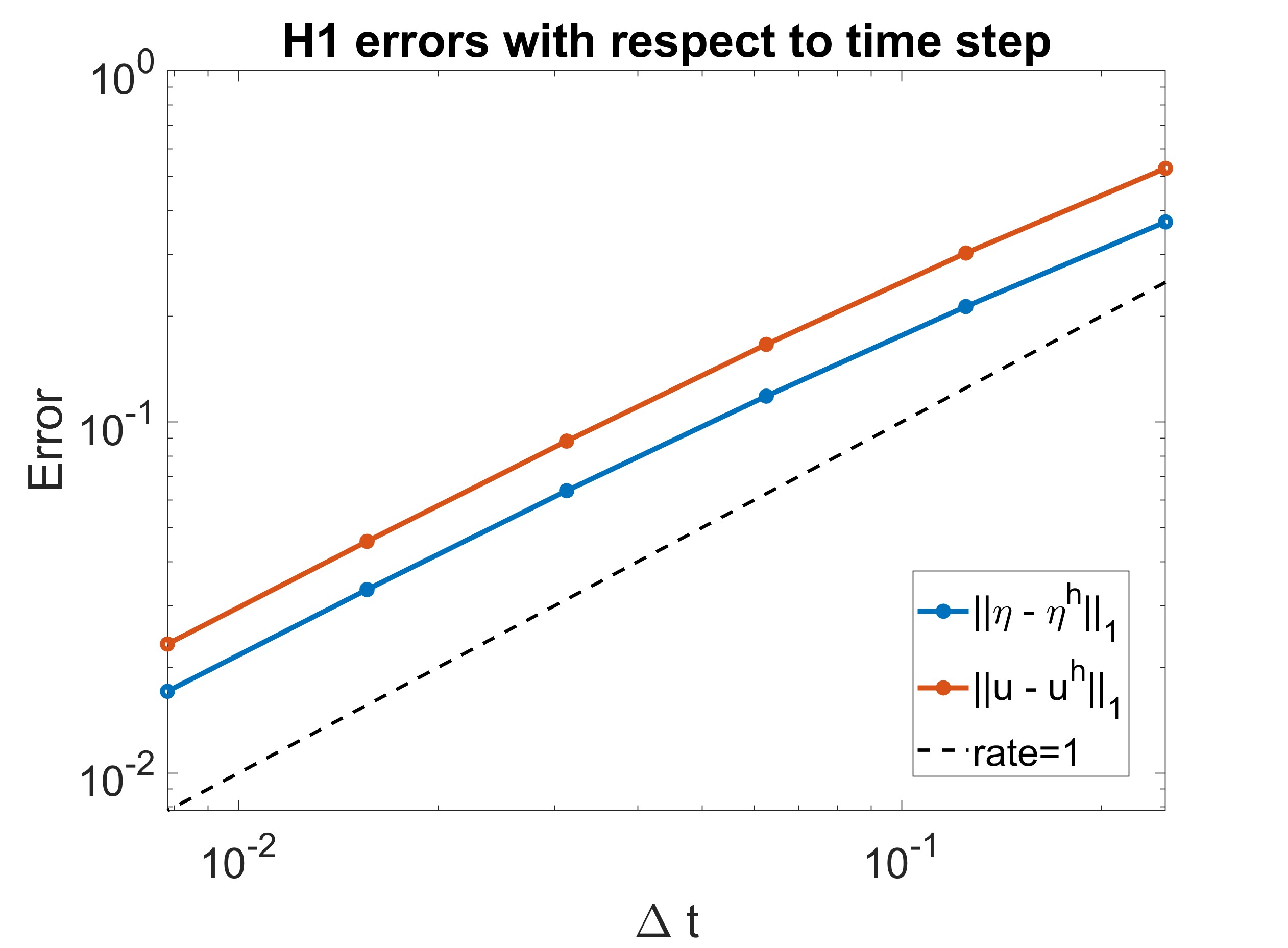}
        \caption{H1 errors}
        \label{fig:FF_H1_time_mixed}
    \end{subfigure}
    \caption{Convergence in time; $\Delta x=1/32, T = 1$}
    \label{fig:FF_time_conv_mixed}
\end{figure}

\section{Conclusions.}\label{sec:Conclusion}

We have presented a non-iterative, strongly coupled partitioned method for FSI problems. This method is centered on solving a Schur complement equation which implicitly expresses a Lagrange multiplier, representing interface flux and fluid pressure, in terms of fluid velocity and structural displacement. 
We have rigorously demonstrated the inf-sup conditions for both semi-discrete and fully discrete cases, and our analysis confirms that this formulation is well-posed. Additionally, initial numerical results demonstrate expected rates of convergence.

We plan to expand our approach by applying reduced order models (ROMs) to any variable involved in the coupling scheme. This technique will prove especially advantageous in scenarios that require multiple queries such as design or optimization, real-time parameter estimation, or control problems. 
By utilizing ROMs, we can significantly reduce the computational size and cost of the system and enhance the efficiency of our approach.

Furthermore, we plan to explore a more physically realistic situation by incorporating the Navier-Stokes equations instead of the linear Stokes equations for the fluid. We also intend to investigate a moving domain problem, which will provide additional insights into the applicability and performance of our method.

\vspace{.2in}

{\bf Appendix}

\vspace{.2in}

\noindent
{\bf Proof of Lemma \ref{lem:divEigBound}}
\begin{proof}
Let $\{\phi_i\}_i^{N}$ be a basis for the finite element space, and define the function $v^h = \sum_{i=1}^N v_i \phi_i$. The elements of the finite element divergence matrix satisfy $(L)_{ij} = \lambda (\nabla \cdot \phi_j, \nabla \cdot \phi_i)_{\Omega}$. Thus, we have $(L \bm{v}, \bm{v}) = \lambda(\nabla \cdot v^h,\nabla \cdot v^h)$. 
    We bound the term $\frac{(L\bm{v},\bm{v})}{|\bm{v}|^2}$.
$$   
         \frac{(L\bm{v},\bm{v})}{|\bm{v}|^2} = \frac{\lambda (\nabla \cdot v^h, \nabla \cdot v^h)}{|\bm{v}|^2}
            = \frac{\lambda ||\nabla \cdot v^h||_0^2}{|\bm{v}^h|^2} 
            \ge 0 \hspace{10mm} \text{since $\lambda > 0$}.
$$
Using the inverse inequality and Lemma \ref{lem:normFEandvec},
\begin{align*}
        \begin{split}
            \frac{(L\bm{v},\bm{v})}{|\bm{v}|^2} & = \,\frac{\lambda ||\nabla \cdot v^h||_0^2}{|\bm{v}|^2} \leq \frac{\lambda d ||\nabla v^h||_0^2}{|\bm{v}|^2}
                   \leq \,\lambda d \frac{C^* h^{-2}||v^h||_0^2}{|\bm{v}|^2} 
            \, \leq \, \lambda d C^* h^{-2} C_u h^d .
         \end{split}
     \end{align*}
   Combining, we see that 
     \begin{align*}
         0 \,\leq\, \frac{(L\bm{v},\bm{v})}{|\bm{v}|^2} \,\leq \,\lambda d C_u C^* h^{d-2}. 
     \end{align*}
     Since this holds for all vectors $\bm{v}$, in particular if $\bm{v}$ is an eigenvector of $L$, we have shown that an eigenvalue $\mu_i(L)$ of $L$ can be bounded as follows.
        \begin{align*}
         0 \,\leq \,\mu_i(L)\, \leq\, \lambda d C_u C^* h^{d-2}.
     \end{align*}
\end{proof}
\vspace{.2in}

\noindent
{\bf Proof of Lemma \ref{lem:condWBound}}
\begin{proof}
First, we use the following result from \cite{Ern_2004} to bound the eigenvalues of the mass matrix $M$. 

{\it For a quasi-uniform family of  triangularization $\mathcal{T}_h$ of $\Omega$, there exist constants $c_1, c_2$ independent of $h$ such that the eigenvalues of the mass matrix $M$ satisfy 
\begin{align*}
    c_1 h^d \leq \mu_i(M) \leq c_2 h^d. 
\end{align*}
}
For the FSI model, we have 
\begin{align}
    c_1 \rho_f h_1^d &\leq \mu_i(M_f) \leq c_2 \rho_f h_1^d \label{Mfbound},\\
    c_1 \rho_s h_2^d &\leq \mu_i(M_s) \leq c_2 \rho_s h_2^d. \label{Msbound}
\end{align}

Since both $W_{f}, W_{s}$ are symmetric positive definite matrices, their singular values $\sigma(W_r)$ are equivalent to their eigenvalues $\mu(W_r)$ for $r \in \{f,s\}$. With $W_{f} = M_f + \Delta t K_f$, the eigenvalues of $W_{f}$ can be bounded as follows, using \eqref{Mfbound} and Lemma \ref{lem:stiffEigBound}.
\begin{align*}
    c_1 \rho_f h_1^d + \Delta t (2 \nu_f \alpha C_\ell   h_1^d) &\leq \mu_i(W_{f}) \leq c_2 \rho_f h_1^d + \Delta t \left( 2 \nu_f \alpha^* C_u h_1^d (C^* h_1^{-2} + 1) \right), \\
    C_{\min,f} h_1^d (1 + \Delta t) &\leq \mu_i(W_{f}) \leq C_{\max,f} h_1^d (1 + \Delta t h_1^{-2}).
\end{align*}
Note that this result can also be found
in \cite{Quarteroni_1999} (see Eq. 7.1.18).
To bound the eigenvalues of $W_{s} =  M_s + \Delta t^2 (K_s + L)$, we combine Lemmas \ref{lem:stiffEigBound}, \ref{lem:divEigBound}, and \eqref{Msbound} to get the following result:
\begin{align*}
     c_1 \rho_s h_2^d + \Delta t^2 \left(2 \nu_s \alpha C_\ell  h_2^d + 0 \right) &\leq \mu_i(W_{s}) \leq  c_2 \rho_s h_2^d + \Delta t^2 \left( 2 \nu_s \alpha^* C_u h_2^d \left( C^* h_2^{-2} + 1 \right) + d \lambda C_u C^* h^{d-2}_2 \right), \\
    C_{\min,s} h_2^d \left(1 + \Delta t^2 \right) &\leq \mu_i(W_{s}) \leq  C_{\max,s} h_2^d \left(1 + \Delta t^2 h_2^{-2} \right).
\end{align*}

The eigenvalues of the block matrix $W$ are the union of the set of eigenvalues of $W_{f}$ and $W_{s}$.
Formally,
\begin{align*}
    \min \Bigl\{C_{\min,f}h_1^d(1+\Delta t), C_{\min,s} h_2^d(1+\Delta t^2) \Bigr\} \leq \mu_i(W) \leq \max\Bigl\{C_{\max,f}h_1^d(1+\Delta t h_1^{-2}), C_{\max,s}h_2^d\left(1 + \Delta t^2 h_2^{-2}\right)\Bigr\}.
\end{align*}

Thus, 
\begin{align}\label{eq:condW-1}
k(W) &= \frac{\mu_{\max}(W)}{\mu_{\min}(W)} 
\leq \frac{\max\Bigl\{C_{\max,f}h_1^d(1+\Delta t h_1^{-2}), C_{\max,s}h_2^d\left(1 + \Delta t^2 h_2^{-2}\right)\Bigr\}}{ \min \Bigl\{C_{\min,f}h_1^d(1+\Delta t), C_{\min,s} h_2^d(1+\Delta t^2) \Bigr\}}.
\end{align}
\end{proof}

\vspace{.2in}

\noindent
{\bf{Proof of Lemma \ref{lemma:AsigmaBound}}}
\begin{proof}
First, note that for $\bm{y} = \begin{bmatrix}
    \bm{u} \\ \bm{\eta} 
\end{bmatrix} \in Y $ and $\bm{z} = \begin{bmatrix}
    \bm{p} \\ \bm{g}
\end{bmatrix} \in Z$, we may write 
\begin{align}
    \bm{y}^T A^T \bm{z} = b(\bm{y}^h; \bm{z}^h).
\end{align}
Using \eqref{eq:variationalSigma}, 
the variational form for the singular values of $A$ or $A^T$ is as follows.
\begin{align}\label{eq:variationalSigmaforA}
    \sigma_{max}(A^T) = \underset{\bm{z} \in \mathbb{R}^{N_p + 2N_\gamma}}{\text{max}} \underset{\bm{y} \in \mathbb{R}^{2N_u + 2N_\eta}}{\text{max}} \frac{\bm{y}^T A^T \bm{z}}{|\bm{y}| |\bm{z}|}, \hspace{5mm}
     \sigma_{min}(A^T) = \underset{\bm{z} \in \mathbb{R}^{N_p + 2N_\gamma}}{\text{min}} \underset{\bm{y} \in \mathbb{R}^{2N_u + 2N_\eta}}{\text{max}} \frac{\bm{y}^T A^T \bm{z}}{|\bm{y}| |\bm{z}|}.
\end{align}

First, we need the continuity of the bilinear form $b(\cdot;\cdot)$.
\begin{align}\label{eq:bContinuity}
\begin{split}
    b(\bm{u}^h,\bm{\eta}^h;q^h,\bm{s}^h) &= (\bm{\eta}^h,\bm{s}^h)_\gamma - (\bm{u}^h,\bm{s}^h)_\gamma - (\nabla \cdot \bm{u}^h,q^h)_{\Omega_f} \\
    & \leq || \bm{\eta}^h ||_{1/2,\gamma} ||\bm{s}^h||_{-1/2,\gamma} +  ||\bm{u}^h||_{1/2,\gamma} ||\bm{s}^h||_{-1/2,\gamma} + ||\nabla \cdot \bm{u}^h||_{0,\Omega_f} ||q^h||_{0,\Omega_f} \\
    & \leq C \Big(||\bm{\eta}^h||_{1,\Omega_s} +||\bm{u}^h||_{1,\Omega_f}\Big) ||\bm{s}^h||_{-1/2,\gamma} +  ||\bm{u}^h||_{1,\Omega_f} ||q^h||_{0,\Omega_f} \\
    & \leq C \Big(||\bm{\eta}^h||_{1,\Omega_s} + ||\bm{u}^h||_{1,\Omega_f}\Big) ||\bm{s}^h||_{-1/2,\gamma} +||\bm{u}^h||_{1,\Omega_f} ||q^h||_{0,\Omega_f} + ||\bm{\eta}^h||_{1,\Omega_s} ||q^h||_{0,\Omega_f}\\ 
    &= C(||\bm{\eta}^h||_{1,\Omega_s} + ||\bm{u}^h||_{1,\Omega_f}) (||q^h||_{0,\Omega_f} + ||\bm{s}^h||_{-1/2,\gamma}) \\
    & \leq 2C \Big( ||\bm{\eta}^h||^2_{1,\Omega_s} + ||\bm{u}^h||^2_{1,\Omega_f} \Big)^{1/2} \Big( ||q^h||^2_{0,\Omega_f} + ||\bm{s}^h||^2_{-1/2,\gamma}\Big)^{1/2} \\
    &= 2C ||\bm{y}^h||_Y ||\bm{z}^h||_Z.
    \end{split}
\end{align}

By the inverse inequality $( ||\nabla \bm{v}^h||_0^2 \leq C^* h^{-2} ||\bm{v}^h||_0^2 \text{ for } \bm{v}^h \in V^h)$ and Lemma \ref{lem:normFEandvec} 
\begin{align}\label{eq:yhBound}
\begin{split}
    \frac{||\bm{y}^h||_Y^2}{|\bm{y}|^2} &= \frac{||\bm{u}^h||_{1,\Omega_f}^2 + ||\bm{\eta}^h||_{1,\Omega_s}^2}{|\bm{u}|^2 + |\bm{\eta}|^2} 
     \leq C_{PF}^{-2} \left( \frac{||\nabla \bm{u}^h||_{0,\Omega_f}^2}{|\bm{u}|^2} + \frac{|| \nabla \bm{\eta}^h||_{0,\Omega_s}^2}{|\bm{\eta}|^2} \right)  \\ 
    & \leq C_{PF}^{-2} \left( \frac{C^* h_1^{-2}||\bm{u}^h||_{0,\Omega_f}^2}{|\bm{u}|^2} + \frac{C^* h_2^{-2}|| \bm{\eta}^h||_{0,\Omega_s}^2}{|\bm{\eta}|^2} \right) \hspace{10mm}  \\
    & \leq C_{PF}^{-2} \left(  C^* h_1^{-2} C_u h_1^d + C^* h_2^{-2} C_u h_2^d \right)\hspace{10mm}  \\
    &= C_{PF}^{-2}C^* C_u (h_1^{d-2} + h_2^{d-2}).
    \end{split}
\end{align}
If we used the $Y$ norms on the interface $\gamma$ instead of the subdomains $\Omega_f, \Omega_s$, the exponents on $h_1, h_2$ would be $d-3$ instead of $d-2$, owing to the use of $h_i^{d-1}$ when applying Lemma \ref{lem:normFEandvec}. 
Next, 
\begin{align}\label{eq:zhBound}
\begin{split}
    \frac{||\bm{z}^h||_Z^2}{|\bm{z}|^2} &= \frac{||\bm{p}^h||_{0,\Omega_f}^2 + ||\bm{g}^h||_{-1/2,\gamma}^2}{|\bm{p}|^2 + |\bm{g}|^2} 
 \, \leq \, \frac{||\bm{p}^h||_{0,\Omega_f}^2}{|\bm{p}|^2} + \frac{||\bm{g}^h||_{-1/2,\gamma}^2}{|\bm{g}|^2} \\
    & \leq\frac{||\bm{p}^h||_{0,\Omega_f}^2}{|\bm{p}|^2} + \frac{||\bm{g}^h||_{0,\gamma}^2}{|\bm{g}|^2} 
\, \leq \, C_u (h_1^d + h_\gamma^{d-1}).  
    \end{split}
\end{align}
Here, we note that the power of $h_\gamma$ is one dimension less than the domain, since the LM is defined only along the interface and not the entire domain. This will contribute to the overall conditioning of the Schur complement matrix. 

Next, we may combine these to bound $\sigma_{max}(A^T)$. Without loss of generality, we assume here that $h_1 \leq h_2$. The continuity of $b(\cdot;\cdot)$ in \eqref{eq:bContinuity} and estimates  \eqref{eq:yhBound} and  \eqref{eq:zhBound} imply that 
\begin{align}
    \begin{split}
\frac{\bm{y}^T A^T \bm{z}}{|\bm{y}| |\bm{z}|} &= \frac{b(\bm{y}^h;\bm{z}^h)}{|\bm{y}| |\bm{z}|}  \leq 2C \frac{ ||\bm{y}^h||_Y ||\bm{z}^h||_Z}{|\bm{y}| |\bm{z}|}  \\ 
& \leq 2C \sqrt{C_{PF}^{-2} C^* C_u (h_1^{d-2} + h_2^{d-2})} \sqrt{C_u (h_1^d + h_\gamma^{d-1})}  \\
&\leq C_{\text{max}} h_2^{(d-2)/2}(h_2^d + h_\gamma^{d-1})^{1/2}.
    \end{split}
\end{align}
Using the characterization of $\sigma_{\max}(A^T)$ from \eqref{eq:variationalSigmaforA}, we see $\sigma_{\max}(A^T) \leq C_{\text{max}} h_2^{(d-2)/2}(h_2^d + h_\gamma^{d-1})^{1/2}$, where $C_{\max} = 2^{3/2}C C_{PF}^{-1}(C^*)^{1/2}C_u$.

Next, we wish to bound $\sigma_{\min}(A)$ from below. Again, WLOG, we assume $h_1 \leq h_2$.

\begin{align*}
    \begin{split}
\frac{||\bm{y}^h||^2_Y ||\bm{z}^h||^2_Z}{|\bm{y}|^2 |\bm{z}|^2} &= \left( \frac{||\bm{u}^h||_{1,\Omega_f}^2 + ||\bm{\eta}^h||_{1,\Omega_s}^2}{|\bm{u}|^2 + |\bm{\eta}|^2}\right) \left( \frac{||\bm{p}^h||_{0,\Omega_f}^2 + ||\bm{g}^h||_{-1/2,\gamma}^2}{|\bm{p}|^2 + |\bm{g}|^2} \right) \\ 
& \geq \left( \frac{||\bm{u}^h||_{0,\Omega_f}^2 + ||\bm{\eta}^h||_{0,\Omega_s}^2}{|\bm{u}|^2 + |\bm{\eta}|^2}\right) \left( \frac{||\bm{p}^h||_{0,\Omega_f}^2 + h_\gamma C_2^{-1}||\bm{g}^h||_{1/2,\gamma}^2}{|\bm{p}|^2 + |\bm{g}|^2} \right)  \hspace{5mm} \text{ by \eqref{InvIneq}}\\ 
& \geq \left( \frac{||\bm{u}^h||_{0,\Omega_f}^2 + ||\bm{\eta}^h||_{0,\Omega_s}^2}{|\bm{u}|^2 + |\bm{\eta}|^2}\right) \left( \frac{||\bm{p}^h||_{0,\Omega_f}^2 + h_\gamma C_2^{-1}||\bm{g}^h||_{0,\gamma}^2}{|\bm{p}|^2 + |\bm{g}|^2} \right) \\
& \geq \left( \frac{C_\ell h_1^d |\bm{u}|^2 + C_\ell h_2^d |\bm{\eta}|^2}{|\bm{u}|^2 + |\bm{\eta}|^2}\right) \left( \frac{C_\ell h_1^d |\bm{p}|^2 + h_\gamma C_2^{-1} C_\ell h_\gamma^{d-1} |\bm{g}|^2}{|\bm{p}|^2 + |\bm{g}|^2} \right) .
    \end{split}
\end{align*}

As in the inf-sup proof, assume that $\frac{h_\gamma}{h_1}, \frac{h_\gamma}{h_2}$ are large enough, i.e., there exists a constant $K$ independent of $h_i$ such that $h_\gamma \geq K h_2 \geq K h_1$. 
Then we may simplify the above inequality.
$$
        \frac{||\bm{y}^h||^2_Y ||\bm{z}^h||^2_Z}{|\bm{y}|^2 |\bm{z}|^2} \geq \left(C_\ell h_1^d\right) \left( \frac{C_\ell h_1^d \left( |\bm{p}|^2 + C_2^{-1} K |\bm{g}|^2 \right)}{|\bm{p}|^2 + |\bm{g}|^2} \right) \\
        = C_\ell^2 h_1^{2d} \overline{C}.
$$
Now, 
$$
    \frac{\bm{y}^T A^T \bm{z}}{|\bm{y}||\bm{z}|} = \frac{b(\bm{y}^h;\bm{z}^h)}{|\bm{y}||\bm{z}|} 
    \geq C_\ell h_1^d \overline{C}^{1/2}\frac{b(\bm{y}^h;\bm{z}^h)}{||\bm{y}^h||_Y ||\bm{z}^h||_Z}.
$$
Using the variational form for $\sigma_{\min}(A^T)$ in \eqref{eq:variationalSigmaforA} as well as the inf-sup condition in Theorem \eqref{thm:DiscInfSup} gives

\begin{align}
\begin{split}
\sigma_{min}(A^T) &= \underset{\bm{z} \in \mathbb{R}^{N_p + 2N_\gamma}}{\text{min}} \underset{\bm{y} \in \mathbb{R}^{2N_u + 2N_\eta}}{\text{max}} \frac{\bm{y}^T A^T \bm{z}}{|\bm{y}| |\bm{z}|} \geq \left( C_\ell \overline{C}^{1/2} h_1^d\right) \underset{\bm{z}^h \in Z}{\text{inf}} \hspace{2mm}\underset{\bm{y}^h \in Y}{\text{sup}} \frac{b(\bm{y}^h;\bm{z}^h)}{||\bm{y}^h||_Y ||\bm{z}^h||_Z}\\
&= C_{\min}  h_1^d.
\end{split}
\end{align}

\end{proof}

\small 
 \bibliography{main}
    \bibliographystyle{plainurl}

\end{document}